\newtheorem{theorem}{\color{black}\indent Theorem}[section]
\newtheorem{lemma}{\color{black}\indent Lemma}[section]
\newtheorem{remark}{\color{black}\indent Remark}[section]
\begin{document}
\title{\LARGE\bf Lifespan of solutions to a damped fourth-order wave equation with logarithmic nonlinearity}
\author{Yuzhu Han$^{\dag}$ \qquad Qi Li}
 \date{}
 \maketitle

\footnotetext{\hspace{-1.9mm}$^\dag$Corresponding author.\\
Email addresses: yzhan@jlu.edu.cn(Y. Han).

\thanks{
$^*$Supported by NSFC (11401252) and by The Education Department of Jilin Province (JJKH20190018KJ).}}
\begin{center}
{\noindent\it\small School of Mathematics, Jilin University,
 Changchun 130012, P.R. China}
\end{center}

\date{}
\maketitle

{\bf Abstract}\ This paper is devoted to the lifespan of solutions to a damped fourth-order wave equation
with logarithmic nonlinearity
$$u_{tt}+\Delta^2u-\Delta u-\omega\Delta u_t+\alpha(t)u_t=|u|^{p-2}u\ln|u|.$$
Finite time blow-up criteria for solutions at both lower and high initial energy levels are established,
and an upper bound for the blow-up time is given for each case.
Moreover, by constructing a new auxiliary functional and making full use of the strong damping term,
a lower bound for the blow-up time is also derived.

{\bf Keywords} Lifespan; Damped; Fourth-order wave equation;  Logarithmic nonlinearity; Initial energy.

{\bf AMS Mathematics Subject Classification 2010:} 35L35, 35L76.

\section{Introduction}
\setcounter{equation}{0}

In this paper, we are concerned with the following initial boundary value
problem for a damped fourth-order wave equation with logarithmic nonlinearity
\begin{equation}\label{1.1}
\begin{cases}
u_{tt}+\Delta^2u-\Delta u-\omega\Delta u_t+\alpha(t)u_t=|u|^{p-2}u\ln|u|, & (x,t)\in\Omega\times(0,T),\\
u(x,t)=\Delta u(x,t)=0, & (x,t)\in\partial \Omega\times(0,T),\\
u(x,0)=u_0(x), \quad u_t(x,0)=u_1(x), &x\in \Omega,
\end{cases}
\end{equation}
where $\Omega\subset \mathbb{R}^n(n\geq1)$ is a bounded domain with smooth boundary $\partial \Omega$,
$T\in(0,+\infty]$ is the maximal existence time of the solution $u(x,t)$, $\omega>0$,
$\alpha(t): [0,\infty)\rightarrow [0,\infty)$ is a nonincreasing bounded differentiable function,
and the exponent $p$ satisfies
$$(A)\qquad 2<p<2_*,$$
where $2_*=+\infty$ if $n\leq 4$ and $2_*=\dfrac{2n}{n-4}$ if $n\geq 5$.

Problems like \eqref{1.1} have their roots in many branches of physics such as nuclear physics, optics and geophysics.
They may also be used to describe some phenomena of granular materials such as the longitudinal motion
of an elastic-plastic bar. Interested reader may refer to \cite{Al,An-1,An-2,An-3,Guo-Li}
for more background of problems like \eqref{1.1}.
It is well known that the damping terms (both strong $\Delta u_t$ and weak $u_t$)
prevent solutions from blowing up while the nonlinear terms force solutions to blow up.
So it is of great interest to investigate how one dominates the other,
and much effort has been devoted to this direction during the past few years.
For example, Gazzola et al. \cite{Gazzola} investigated the following damped wave equation
\begin{equation}\label{sec}
 u_{tt}-\Delta u-\omega\Delta u_t+\mu u_t=|u|^{p-2}u
\end{equation}
in a bounded domain of $\mathbb{R}^n$, where $\omega\geq 0$, $\mu>-\omega\lambda_1$ and $p>2$.
By using the potential well method first proposed by Sattinger et al. \cite{Payne,Sattinger},
they obtained the existence of global and finite time blow-up solutions to \eqref{sec} for
initial data at different energy levels. As for the damped fourth-order wave equations,
Lin et al. \cite{Lin} considered the following hyperbolic equation with strong damping
\begin{equation}\label{fou-1}
 u_{tt}+\Delta^2u-\Delta u-\omega\Delta u_t=f(u)
\end{equation}
in a bounded domain of $\mathbb{R}^n$ with $\omega>0$.
Under certain conditions on the initial data and on the nonlinearity $f$,
they proved the existence of global weak solutions and global strong solutions
by using the classical potential well method. When the nonlinearity $f(u)$ grows super-linearly with respect to $u$ as $u$ tends to infinity,
the solutions to \eqref{fou-1} may blow up in finite time.
In 2018, Wu \cite{Wu} considered the following initial boundary value problem
\begin{equation}\label{fou-2}
\begin{cases}
u_{tt}+\Delta^2u-\Delta u-\omega\Delta u_t+\alpha(t)u_t=|u|^{p-2}u, & (x,t)\in\Omega\times(0,T),\\
u(x,t)=\Delta u(x,t)=0, & (x,t)\in\partial \Omega\times(0,T),\\
u(x,0)=u_0(x), \quad u_t(x,0)=u_1(x), &x\in \Omega,
\end{cases}
\end{equation}
where $\omega$ and $\alpha(t)$ fulfill the same conditions as that of problem \eqref{1.1} and
$p$ satisfies the so-called subcritical condition, i.e.,
\begin{equation*}
 p\in(2,\infty)\quad if\quad  n\leq 4;\quad p\in\big(2,\frac{2n-4}{n-4}\big)\quad  if \quad n\geq 5.
\end{equation*}
After showing that the unstable set is invariant under the flow of \eqref{fou-2},
he proved a blow-up result for problem \eqref{fou-2} with initial energy smaller than the depth of the
potential well, by applying concavity argument. Moreover, a lower bound for the blow-up time is derived.
Later, problem \eqref{fou-2} was reconsidered by Guo et al. \cite{Guo-Li} and
the results of \cite{Wu} were extended in two aspects. The first is that they obtained a blow-up result
for high initial energy, and the second is that lower bound for the blow-up time is also derived
for some supercritical $p$, with the help of inverse H\"{o}lder's inequality and interpolation inequality.

On the other hand, evolution equations with logarithmic nonlinearity have
also attracted more and more attention in recent years,
due to their wide applications to quantum field theory and other applied sciences.
Among the huge amount of interesting literature,
we only refer the interested reader to \cite{Cao1,Chen1,Chen2,Di,Han,Han-Cao-Sun,Ji1,Le1,Le2,Lian,Ma},
where qualitative properties of solutions to hyperbolic or parabolic equations with logarithmic nonlinearities were studied.
In particular, Di et al. \cite{Di} considered the following initial boundary value problem for a semilinear wave equation
with strong damping and logarithmic nonlinearity
\begin{equation}\label{sec-2}
\begin{cases}
u_{tt}-\Delta u-\Delta u_t=|u|^{p-2}u, & (x,t)\in\Omega\times(0,T),\\
u(x,t)=0, & (x,t)\in\partial \Omega\times(0,T),\\
u(x,0)=u_0(x), \quad u_t(x,0)=u_1(x), &x\in \Omega,
\end{cases}
\end{equation}
when $\Omega\subset \mathbb{R}^n(n\geq1)$ is a bounded domain with smooth boundary $\partial \Omega$,
$2<p<+\infty$ if $n=1,2$ and $2<p<\frac{2n}{n-2}$ if $n\geq 3$.
The existence of global or finite time blow-up solutions to problem \eqref{sec-2} with initial energy less
than or equal to the depth of the potential well was investigated by using the potential well method.
Moreover, the decay rate of the energy functional was obtained for global solutions
and upper and lower bounds for the blow-up time were also derived for blow-up solutions.
However, the case that the initial energy is larger than the depth of the potential well was not considered in \cite{Di},
and we do not know whether or not problem \eqref{sec-2} admits finite time blow-up solutions for this case.
In addition, the lower bound for the blow-up time was obtained when $p$ is subcritical,
i.e., $p<\frac{2n-2}{n-2}$. When $p\in[\frac{2n-2}{n-2},\frac{2n}{n-2})$ for $n\geq 3$,
whether a lower bound for the blow-up time can be obtained is still open.

Motivated mainly by \cite{Di,Guo-Li,Wu}, we will consider problem \eqref{1.1} and investigate how the damping terms and logarithmic
nonlinearity determine the blow-up conditions and blow-up time of the solutions.
More precisely, we shall present some sufficient conditions for the solutions to problem \eqref{1.1} to blow up in
finite time with both lower and high initial energy and derive an upper bound for the blow-up time for each case.
Moreover, we also estimate a lower bound for the blow-up time, which, thanks to the strong damping term, also includes some supercritical case.
For simplicity, we only consider \eqref{1.1} for the case $\omega=1$ and $\alpha(t)\equiv 1$.
The main results can be extended to the general case with little difficulty.

The organization of this paper is as follows. In Section 2, as preliminaries,
some notations, definitions and lemmas that will be used in the sequel are introduced.
Finite time blow-up of solutions and upper bound for the blow-up time with lower and
high initial energy will be considered in Section 3 and Section 4, respectively.
In Section 5 we derive a lower bound for the blow-up time.

\section{Preliminaries}
\setcounter{equation}{0}

In this section, we introduce some notations and lemmas which will be used in the sequel.
In what follows, we denote by $\|\cdot\|_r$ the $L^r(\Omega)$-norm ($1\leq r\leq\infty$),
by $(\cdot ,\cdot )$ the $L^2(\Omega)$-inner product
and by $\lambda_1>0$ the first eigenvalue of $-\Delta$ in $\Omega$ under homogeneous Dirichlet boundary condition.
 Set
$$H=\{u\in H^2(\Omega)\cap H_0^1(\Omega):\ u=\Delta u=0\  \text{on}\  \partial\Omega\},$$
and equip it with the norm
$$\|u\|_H=\sqrt{\|\Delta u\|_2^2+\|\nabla u\|_2^2}.$$
For simplicity, we also denote the $H^1(\Omega)$-norm by
$$\|u\|=\sqrt{\|u\|_2^2+\|\nabla u\|_2^2}.$$
Obviously, for any $u\in H$, we have
\begin{equation}\label{lam}
 \lambda_1\|u\|^2\leq \|u\|_H^2.
\end{equation}
For any $u\in H$, define
\begin{equation}\label{j}
J(u)=\dfrac{1}{2}\|u\|_
  H^2-\dfrac{1}{p}\int_\Omega|u|^p\ln|u|{\rm d}x+\dfrac{1}{p^2} \|u\|_p^p,
\end{equation}
\begin{equation}\label{i}
I(u)=\|u\|_H^2-\int_\Omega|u|^p\ln|u|{\rm d}x,
\end{equation}
\begin{equation}\label{N}
\mathcal{N}=\{u\in H\setminus\{0\}: I(u)=0\},
\end{equation}
\begin{equation}\label{d}
d=\inf\limits_{u\in H\setminus\{0\}}\sup\limits_{\lambda>0}J(\lambda u)=\inf\limits_{u\in \mathcal{N}}J(u),
\end{equation}
where $\mathcal{N}$ is called the Nehari manifold and $d$ is the depth of the potential well (also called mountain pass level). In what follows, we shall show that $\mathcal{N}$ is non-empty and $d$ is positive.

The following lemma gives some properties of the so-called fibering map $J(\lambda u)$.
Since the proof is more or less standard (see \cite{Di} for example), we omit it here.

\begin{lemma}\label{fibering}
Let $p$ satisfy (A). Then for any $u\in H\setminus\{0\}$, we have
\begin{eqnarray*}
\mathrm{(i)} \ \lim_{\lambda\rightarrow0^+}J(\lambda u)=0, \ \lim_{\lambda\rightarrow+\infty}J(\lambda u)=-\infty. \ \ \ \ \ \ \ \ \ \ \ \ \ \ \ \ \ \ \ \ \ \ \ \ \ \ \ \ \ \ \ \ \ \ \ \ \ \ \ \ \ \ \ \ \ \ \ \ \ \
\end{eqnarray*}

$\mathrm{(ii)}$ \ there exists a unique $\lambda^*=\lambda^*(u)>0$ such that $\frac{d}{d\lambda}J(\lambda u)|_{\lambda=\lambda^*}=0$.
$J(\lambda u)$ is increasing on
$0<\lambda<\lambda^*$, decreasing on $\lambda^*<\lambda<+\infty$ and takes its maximum at $\lambda=\lambda^*$.

$\mathrm{(iii)}$ $I(\lambda u)>0$ on $0<\lambda<\lambda^*$, $I(\lambda u)<0$ on $\lambda^*<\lambda<+\infty$ and $I(\lambda^* u)=0$.
\end{lemma}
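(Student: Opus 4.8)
The plan is to reduce everything to the one–variable function $\lambda\mapsto J(\lambda u)$ written out explicitly. First I would substitute $|\lambda u|^p=\lambda^p|u|^p$ and $\ln|\lambda u|=\ln\lambda+\ln|u|$ into \eqref{j} to obtain
$$J(\lambda u)=\frac{\lambda^2}{2}\|u\|_H^2-\frac{\lambda^p\ln\lambda}{p}\|u\|_p^p-\frac{\lambda^p}{p}\int_\Omega|u|^p\ln|u|\,{\rm d}x+\frac{\lambda^p}{p^2}\|u\|_p^p.$$
Setting $a=\|u\|_H^2>0$ and $c=\|u\|_p^p>0$ (both strictly positive since $u\neq0$) and writing $b=\int_\Omega|u|^p\ln|u|\,{\rm d}x$, this reads $J(\lambda u)=\frac{a}{2}\lambda^2-\frac{b}{p}\lambda^p-\frac{c}{p}\lambda^p\ln\lambda+\frac{c}{p^2}\lambda^p$. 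Part (i) is then immediate: as $\lambda\to0^+$ every term vanishes (using $\lambda^p\ln\lambda\to0$ because $p>2>0$), while as $\lambda\to+\infty$ the term $-\frac{c}{p}\lambda^p\ln\lambda$ dominates all the others (since $p>2$ and $c>0$), forcing $J(\lambda u)\to-\infty$.

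For (ii) and (iii) the key identity I would record is $\frac{d}{d\lambda}J(\lambda u)=\frac{1}{\lambda}I(\lambda u)$, obtained by differentiating the expression above and checking that the two $\frac{c}{p}\lambda^{p-1}$ contributions cancel, leaving $a\lambda-b\lambda^{p-1}-c\lambda^{p-1}\ln\lambda=\frac{1}{\lambda}\bigl(a\lambda^2-b\lambda^p-c\lambda^p\ln\lambda\bigr)$, whose bracket is exactly $I(\lambda u)$ computed from \eqref{i}. Since $\lambda>0$, the signs of $\frac{d}{d\lambda}J(\lambda u)$, of $I(\lambda u)$, and of $g(\lambda):=I(\lambda u)/\lambda^2=a-\lambda^{p-2}(b+c\ln\lambda)$ all coincide, so it suffices to analyze the single scalar function $g$.

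I would then show $g$ is unimodal. A direct computation gives $g'(\lambda)=-\lambda^{p-3}\psi(\lambda)$ with $\psi(\lambda)=(p-2)(b+c\ln\lambda)+c$; since $\psi$ is strictly increasing in $\lambda$ with $\psi(0^+)=-\infty$ and $\psi(+\infty)=+\infty$, it has a unique zero $\lambda_0>0$, so $g'>0$ on $(0,\lambda_0)$ and $g'<0$ on $(\lambda_0,+\infty)$, i.e. $g$ increases and then decreases. Combined with $g(0^+)=a>0$ and $g(+\infty)=-\infty$, this forces $g$ to have exactly one zero $\lambda^*$ (necessarily in $(\lambda_0,+\infty)$), with $g>0$ on $(0,\lambda^*)$ and $g<0$ on $(\lambda^*,+\infty)$. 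Translating back through the sign correspondence yields $I(\lambda u)>0$ on $(0,\lambda^*)$, $I(\lambda^*u)=0$ and $I(\lambda u)<0$ on $(\lambda^*,+\infty)$, which is (iii); and that $J(\lambda u)$ strictly increases on $(0,\lambda^*)$, strictly decreases afterward, and attains its maximum at $\lambda=\lambda^*$, which is (ii). Uniqueness of the critical point is precisely the uniqueness of the zero of $g$.

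The one place that genuinely needs care—and which I expect to be the main obstacle—is this uniqueness of $\lambda^*$, because the logarithm destroys the pure power-law structure that makes the polynomial case $|u|^{p-2}u$ transparent. The unimodality argument resolves it cleanly: the logarithmic factor enters $g'$ only through the monotone quantity $b+c\ln\lambda$, so $\psi$ remains strictly increasing and $g$ can switch monotonicity at most once. Everything else is a routine limit or sign computation.
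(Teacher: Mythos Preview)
Your argument is correct and complete. The paper itself omits the proof as ``more or less standard'' and simply cites \cite{Di}; your explicit computation of $J(\lambda u)$, the identity $\frac{d}{d\lambda}J(\lambda u)=\lambda^{-1}I(\lambda u)$, and the unimodality analysis of $g(\lambda)=a-\lambda^{p-2}(b+c\ln\lambda)$ via the monotonicity of $\psi(\lambda)=(p-2)(b+c\ln\lambda)+c$ is precisely that standard fibering-map argument, and handles the logarithmic perturbation cleanly.
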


Let $\sigma$ be any positive number such that $p+\sigma<2_*$.
Then it is well known that the embedding from $H$ to $L^{p+\sigma}(\Omega)$ is compact and
there is a positive constant $B_\sigma$ such that
\begin{equation}\label{embedding}
  \|u\|_{p+\sigma}\leq B_\sigma\|u\|_H,\qquad \forall\ u\in H.
\end{equation}

\begin{lemma}\label{c0}
There is a positive constant $C_*$ such that $\|u\|_H\geq C_*$ for any $u\in \mathcal{N}$.
\end{lemma}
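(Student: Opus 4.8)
The plan is to start from the defining relation of the Nehari manifold. For any $u\in\mathcal N$ we have $I(u)=0$, so by the definition \eqref{i} of $I$,
\begin{equation*}
\|u\|_H^2=\int_\Omega|u|^p\ln|u|\,{\rm d}x .
\end{equation*}
Since $u\neq0$ forces $\|u\|_H>0$, it suffices to bound the right-hand side from above by a suitable power of $\|u\|_H$ strictly larger than $2$, and then solve the resulting inequality for $\|u\|_H$.

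The main difficulty is controlling the logarithmic term, and the idea is to first discard its unfavorable part and then tame the logarithm by a small power. I would fix $\sigma>0$ so small that $p+\sigma<2_*$, which keeps the embedding \eqref{embedding} available. On the set $\{|u|<1\}$ one has $\ln|u|<0$, so that region only decreases the integral and may be dropped. On $\{|u|\ge1\}$ I would invoke the elementary inequality
\begin{equation*}
\ln s\le\frac{1}{e\sigma}\,s^{\sigma},\qquad s\ge1,
\end{equation*}
which follows by maximizing $s^{-\sigma}\ln s$ over $s\ge1$ (the maximum $\tfrac1{e\sigma}$ is attained at $s=e^{1/\sigma}$). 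Combining these two observations yields
\begin{equation*}
\int_\Omega|u|^p\ln|u|\,{\rm d}x\le\frac{1}{e\sigma}\int_{\{|u|\ge1\}}|u|^{p+\sigma}\,{\rm d}x\le\frac{1}{e\sigma}\,\|u\|_{p+\sigma}^{p+\sigma}.
\end{equation*}

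Next I would apply the Sobolev embedding \eqref{embedding}, $\|u\|_{p+\sigma}\le B_\sigma\|u\|_H$, to obtain $\|u\|_{p+\sigma}^{p+\sigma}\le B_\sigma^{p+\sigma}\|u\|_H^{p+\sigma}$, so that altogether
\begin{equation*}
\|u\|_H^2\le\frac{B_\sigma^{p+\sigma}}{e\sigma}\,\|u\|_H^{p+\sigma}.
\end{equation*}
Dividing by $\|u\|_H^2>0$ and using $p+\sigma-2>0$ (which holds since $p>2$ and $\sigma>0$) gives the desired lower bound
\begin{equation*}
\|u\|_H\ge\left(\frac{e\sigma}{B_\sigma^{p+\sigma}}\right)^{\frac{1}{p+\sigma-2}}=:C_*>0 .
\end{equation*}
The only genuinely delicate point is the choice of $\sigma$: it must be small enough that $p+\sigma<2_*$ so the compact embedding into $L^{p+\sigma}(\Omega)$ is valid, while the subcritical hypothesis (A) guarantees such a $\sigma$ exists. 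Everything else is a routine splitting-and-embedding estimate, and as a byproduct the same bound shows $\mathcal N\neq\varnothing$ once Lemma \ref{fibering} is used to produce a point on $\mathcal N$.
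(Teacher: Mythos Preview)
Your proof is correct and follows essentially the same route as the paper's: the same splitting into $\{|u|<1\}$ and $\{|u|\ge1\}$, the same elementary bound $\ln s\le(e\sigma)^{-1}s^{\sigma}$, the same Sobolev embedding \eqref{embedding}, and the same explicit constant $C_*=\big(e\sigma/B_\sigma^{p+\sigma}\big)^{1/(p+\sigma-2)}$. The only cosmetic difference is that the paper records the non-emptiness of $\mathcal N$ (via Lemma~\ref{fibering}) at the outset rather than as a closing remark.
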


\begin{proof}
First, it follows from Lemma \ref{fibering} (iii) that $\mathcal{N}$ is non-empty.
For any $u\in \mathcal{N}$, using \eqref{embedding} and the basic inequality $\ln s\leq\dfrac{1}{e\sigma}s^\sigma$
for $s\geq 1$ and $\sigma>0$, we have
  \begin{equation}\label{2.6}
   \begin{split}
     \|u\|_H^2=&\int_\Omega|u|^p\ln|u|{\rm d}x=\int_{\Omega_1}|u|^p\ln|u|{\rm d}x+\int_{\Omega_2}|u|^p\ln|u|{\rm d}x\\
     \leq&\int_{\Omega_2}|u|^p\ln|u|{\rm d}x\leq \dfrac{1}{e\sigma}\int_{\Omega_2}|u|^{p+\sigma}{\rm d}x\\
     \leq&\dfrac{1}{e\sigma}\|u\|_{p+\sigma}^{p+\sigma}\leq \dfrac{B^{p+\sigma}_\sigma}{e\sigma}\|u\|_{H}^{p+\sigma},
   \end{split}
  \end{equation}
where $\Omega_1=\{x\in\Omega: |u(x)|<1\}$ and $\Omega_2=\{x\in\Omega: |u(x)|\geq1\}$.
Recalling that $p>2$, we obtain from \eqref{2.6} that $\|u\|_H\geq\Big(
\dfrac{e\sigma}{B^{p+\sigma}_\sigma}\Big)^{1/(p+\sigma-2)}\triangleq C_*$.
The proof is complete.
\end{proof}

\begin{lemma}\label{depth}
  The depth $d$ of the potential well is positive and
  there is a nonnegative function $v_0\in \mathcal{N}$ such that $J(v_0)=d$.
\end{lemma}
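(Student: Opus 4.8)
The plan is to dispose of the positivity of $d$ by an elementary reduction on the Nehari manifold, and then to produce the minimizer by the direct method of the calculus of variations. For the positivity, note that if $u\in\mathcal N$ then $I(u)=0$, i.e. $\|u\|_H^2=\int_\Omega|u|^p\ln|u|\,{\rm d}x$, and substituting this into \eqref{j} collapses $J$ to
$$J(u)=\Big(\tfrac12-\tfrac1p\Big)\|u\|_H^2+\tfrac1{p^2}\|u\|_p^p,\qquad u\in\mathcal N.$$
Since $p>2$ both terms are nonnegative, and Lemma \ref{c0} gives $\|u\|_H\ge C_*$, so $J(u)\ge(\tfrac12-\tfrac1p)C_*^2$ for every $u\in\mathcal N$; taking the infimum yields $d\ge(\tfrac12-\tfrac1p)C_*^2>0$. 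This same reformulated identity is the workhorse for the rest of the argument, since it makes the coercivity of $J$ on $\mathcal N$ transparent.

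Next I would take a minimizing sequence $\{u_k\}\subset\mathcal N$ with $J(u_k)\to d$. The displayed identity immediately bounds $\|u_k\|_H^2$ and $\|u_k\|_p^p$, so along a subsequence $u_k\rightharpoonup v_0$ weakly in $H$, and by \eqref{embedding} together with the Rellich–Kondrachov theorem $u_k\to v_0$ strongly in $L^{p+\sigma}(\Omega)$ and $L^p(\Omega)$ and a.e. in $\Omega$. The analytic heart is the passage to the limit in the logarithmic term, $\int_\Omega|u_k|^p\ln|u_k|\,{\rm d}x\to\int_\Omega|v_0|^p\ln|v_0|\,{\rm d}x$; I would obtain it by splitting $\Omega=\Omega_1\cup\Omega_2$ as in the proof of Lemma \ref{c0}, using that $s^p\ln s$ is bounded on $[0,1]$ and dominated by $\tfrac1{e\sigma}s^{p+\sigma}$ on $[1,\infty)$, and invoking the equi-integrability furnished by strong $L^{p+\sigma}$-convergence (Vitali's theorem). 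Because $I(u_k)=0$ and $\|u_k\|_H\ge C_*$, this limit obeys $\int_\Omega|v_0|^p\ln|v_0|\,{\rm d}x\ge C_*^2>0$, which forces $v_0\neq0$.

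It then remains to show $v_0\in\mathcal N$ and $J(v_0)=d$. Weak lower semicontinuity of $\|\cdot\|_H$ combined with the strong convergence of the other two terms gives both $I(v_0)\le0$ and $(\tfrac12-\tfrac1p)\|v_0\|_H^2+\tfrac1{p^2}\|v_0\|_p^p\le\lim J(u_k)=d$. To upgrade $I(v_0)\le0$ to equality I rule out $I(v_0)<0$: by Lemma \ref{fibering}(iii) there is $\lambda^*=\lambda^*(v_0)\in(0,1)$ with $\lambda^*v_0\in\mathcal N$, and then, since $v_0\neq0$ and $p>2$,
$$J(\lambda^*v_0)=\Big(\tfrac12-\tfrac1p\Big)(\lambda^*)^2\|v_0\|_H^2+\tfrac{(\lambda^*)^p}{p^2}\|v_0\|_p^p<\Big(\tfrac12-\tfrac1p\Big)\|v_0\|_H^2+\tfrac1{p^2}\|v_0\|_p^p\le d,$$
contradicting $J\ge d$ on $\mathcal N$. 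Hence $I(v_0)=0$, so $J(v_0)$ equals the reformulated expression, which is $\le d$; as $v_0\in\mathcal N$ also forces $J(v_0)\ge d$, we conclude $J(v_0)=d$.

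I expect the nonnegativity of $v_0$ to be the main obstacle. In the second-order model \eqref{sec-2} of \cite{Di} one simply replaces $u$ by $|u|$, which leaves the Dirichlet energy $\|\nabla u\|_2$ and the nonlinear functionals unchanged, so a nonnegative minimizing sequence is free; here the energy contains $\|\Delta u\|_2$, and the map $u\mapsto|u|$ neither keeps one inside $H$ nor is nonincreasing for the biharmonic part (indeed $|u|$ need generally not belong to $H^2$). Consequently the clean reduction "$u_k\ge0$" is not automatic, and this is precisely the delicate point introduced by the fourth-order operator: once a nonnegative minimizing sequence is secured, $v_0\ge0$ follows at once from the a.e. convergence $u_k\to v_0$, and the rest of the argument above is unaffected.
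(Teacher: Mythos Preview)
Your argument tracks the paper's proof almost verbatim: the identity $J(u)=\frac{p-2}{2p}\|u\|_H^2+\frac{1}{p^2}\|u\|_p^p$ on $\mathcal N$ combined with Lemma~\ref{c0} for positivity; a minimizing sequence bounded via this identity; compactness of $H\hookrightarrow L^{p+\sigma}$ to pass to the limit in the nonlinear terms (the paper cites dominated convergence, you cite Vitali, but both rest on the same bound $|s|^p|\ln|s||\le C(1+|s|^{p+\sigma})$); weak lower semicontinuity for $I(v_0)\le 0$ and $J(v_0)\le d$; nontriviality of $v_0$ from $\int_\Omega|v_0|^p\ln|v_0|\,{\rm d}x=\lim\|v_k\|_H^2\ge C_*^2$; and the rescaling $\lambda^*v_0\in\mathcal N$ from Lemma~\ref{fibering}(iii) to exclude $I(v_0)<0$.

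Where you and the paper part ways is exactly the point you isolate. The paper's device for nonnegativity is the one you flagged as suspect: it asserts that ``it is easy to check that $\{|v_k|\}\subset\mathcal N$ is also a minimizing sequence of $J$,'' replaces $v_k$ by $|v_k|$, and then reads off $v_0\ge 0$ from a.e.\ convergence. No further justification is given. Your objection that $u\mapsto|u|$ need not preserve $H^2$-membership, let alone leave $\|\Delta u\|_2$ invariant, is entirely on point in the biharmonic setting; the step that is routine for \eqref{sec-2} is genuinely problematic here. So the existence of a minimizer $v_0\in\mathcal N$ with $J(v_0)=d$ is fully secured by your argument (and the paper's), while the nonnegativity assertion in the paper rests precisely on the unproved reduction you identified.
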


\begin{proof}
  By \eqref{j} and \eqref{i} we have
  \begin{equation}\label{i-j}
    J(u)=\dfrac{p-2}{2p}\|u\|^2_H+\dfrac{1}{p}I(u)+\dfrac{1}{p^2}\|u\|_p^p, \quad\ u\in H.
  \end{equation}
Therefore, for any $u\in \mathcal{N}$, by combining Lemma \ref{N} with \eqref{i-j} we obtain
 \begin{equation}\label{d0}
 J(u)\geq \dfrac{p-2}{2p}\|u\|^2_H\geq \dfrac{p-2}{2p}C_*^2 \triangleq d_0>0.
\end{equation}
By the definition of $d$ one sees that $d\geq d_0$, i.e., $d$ is positive.

To show that $d$ can be attained,
let $\{v_k\}_{k=1}^\infty\subset\mathcal{N}$ be a minimizing sequence of $J$.
It is easy to check that $\{|v_k|\}_{k=1}^\infty\subset\mathcal{N}$ is also a minimizing sequence of $J$.
Therefore, without loss of generality, we may assume that $v_k\geq 0$ a.e. in $\Omega$ for all $k\in \mathbb{N}$.
Then $J(v_k)$ is bounded, which, together with \eqref{i-j}, implies
that $\{v_k\}_{k=1}^\infty\subset\mathcal{N}$ is bounded in $H$.
Noticing that the embedding from $H$ to $L^{p+\sigma}(\Omega)$ is compact,
we see that there is a subsequence of $\{v_k\}_{k=1}^\infty\subset\mathcal{N}$,
which we still denote by $\{v_k\}_{k=1}^\infty\subset\mathcal{N}$, and a $v_0\in H$ such that
\begin{equation*}
\begin{split}
v_k\rightharpoonup v_0 \ & weakly \ in \ H \ as\ k \rightarrow\infty,\\
v_k\rightarrow v_0 \ & \ strongly \ in \ L^{p+\sigma}(\Omega) \ as\ k \rightarrow\infty,\\
v_k\rightarrow v_0 \ & \ a.e. \ in \ \Omega \ as\ k \rightarrow\infty.
\end{split}
\end{equation*}
Hence, $v_0\geq 0$ a.e. in $\Omega$. Furthermore, using the dominated convergence theorem, we obtain
\begin{equation}\label{ln}
\int_\Omega |v_0|^p\ln|v_0|{\rm d}x=\lim\limits_{k\rightarrow \infty}\int_\Omega |v_k|^p\ln|v_k|{\rm d}x,
\end{equation}
\begin{equation}\label{vp}
\int_\Omega |v_0|^p{\rm d}x=\lim\limits_{k\rightarrow \infty}\int_\Omega |v_k|^p{\rm d}x.
\end{equation}
Moreover, by the weak lower semicontinuity of $\|\cdot\|_H$, we have
\begin{equation}\label{h}
\|v_0\|_H\leq\liminf\limits_{k\rightarrow\infty}\|v_k\|_H.
\end{equation}
Therefore, it follows from \eqref{ln}-\eqref{h} that
\begin{equation}\label{jv0}
J(v_0)\leq\liminf\limits_{k\rightarrow\infty}J(v_k)=d,
\end{equation}
and
\begin{equation}\label{iv0}
I(v_0)\leq\liminf\limits_{k\rightarrow\infty}I(v_k)=0.
\end{equation}

It remains to show that $v_0\not\equiv 0$ and $I(v_0)=0$ to complete the proof.
By \eqref{ln} and Lemma \ref{c0} we know
\begin{equation*}
 \begin{split}
    \int_\Omega |v_0|^p\ln|v_0|{\rm d}x=&\lim\limits_{k\rightarrow \infty}\int_\Omega |v_k|^p\ln|v_k|{\rm d}x\\
      =&\lim\limits_{k\rightarrow \infty}\|v_k\|_H^2\geq C_*^2,
 \end{split}
\end{equation*}
which implies that $v_0\not\equiv 0$.

If $I(v_0)<0$, then by Lemma \ref{fibering} (iii) we know that there exists a $\lambda^*\in(0,1)$ such that
$I(\lambda^*v_0)=0$, i.e., $\lambda^*v_0\in\mathcal{N}$. By the definition of $d$, we see
\begin{equation*}
  \begin{split}
     d\leq&J(\lambda^*v_0)=\dfrac{(p-2)\lambda^{*2}}{2p}\|v_0\|_H^2+\dfrac{\lambda^{*p}}{p^2}\|v_0\|_p^p \\
       =&\lambda^{*2}\Big[\dfrac{p-2}{2p}\|v_0\|_H^2+\dfrac{\lambda^{*p-2}}{p^2}\|v_0\|_p^p\Big]\\
       <&\lambda^{*2}\Big[\dfrac{p-2}{2p}\|v_0\|_H^2+\dfrac{1}{p^2}\|v_0\|_p^p\Big]\\
       \leq&\lambda^{*2}\liminf\limits_{k\rightarrow\infty}\Big[\dfrac{p-2}{2p}\|v_k\|_H^2+\dfrac{1}{p^2}\|v_k\|_p^p\Big]\\
       =&\lambda^{*2}\liminf\limits_{k\rightarrow\infty}J(v_k)=\lambda^{*2}d,
  \end{split}
\end{equation*}
a contradiction. Therefore, $I(v_0)=0$ and $v_0\not\equiv 0$, which means that $v_0\in\mathcal{N}$.
Recalling \eqref{jv0} and the definition of $d$ again one sees that $J(v_0)=d$.
The proof is complete.
\end{proof}

In this paper, we consider weak solutions to problem \eqref{1.1}.
For completeness, we state, without proof, the local existence theorem
which can be established by slightly modifying the argument in \cite{Liu-Xu}.
Sometimes $u(x,t)$ will be simply written as $u(t)$ if no confusion arises.

\begin{theorem}\label{local-solution}
(\cite{Guo-Li,Wu}) Let $u_0\in H$ and $u_1\in L^2(\Omega)$. Then the problem \eqref{1.1} admits a unique weak solution
$u\in L^\infty(0,T_0;H)$, $u_t\in L^\infty(0,T_0;L^2(\Omega))\cap L^2(0,T_0;H_0^1(\Omega))$, for $T_0>0$ suitably small.
Moreover, the energy functional satisfies
\begin{equation}\label{ei}
 E'(t)=-\int_\Omega (u_t^2+|\nabla u_t|^2){\rm d}x\leq 0,
\end{equation}
where
\begin{equation}\label{e}
 E(t)=E(u(t))=\dfrac{1}{2}\|u_t\|_2^2+J(u(t)).
\end{equation}
\end{theorem}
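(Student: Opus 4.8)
The plan is to prove the local existence and uniqueness by the Faedo--Galerkin method, and to obtain the energy identity \eqref{ei} by testing \eqref{1.1} against $u_t$. I would begin by fixing the $L^2(\Omega)$-orthonormal basis $\{w_j\}_{j\geq1}$ consisting of the eigenfunctions of $-\Delta$ under homogeneous Dirichlet conditions; since $\Delta^2 w_j=\lambda_j^2 w_j$, these functions lie in $H$ and respect the boundary conditions $u=\Delta u=0$. Seeking approximate solutions $u_m(t)=\sum_{j=1}^m g_{jm}(t)w_j$ that solve the projection of \eqref{1.1} onto $\mathrm{span}\{w_1,\dots,w_m\}$, with $u_m(0)\to u_0$ in $H$ and $u_m'(0)\to u_1$ in $L^2(\Omega)$, one obtains a system of second-order ODEs in the coefficients $g_{jm}$. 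Because the nonlinearity $f(s)=|s|^{p-2}s\ln|s|$ is continuous with $f(0)=0$ (as $p>2$ forces $|s|^{p-1}|\ln|s||\to0$ as $s\to0$), Carath\'eodory theory yields a solution on a maximal interval $[0,t_m)$.

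Next I would derive the uniform a priori estimates. Testing the projected system with $g_{jm}'(t)$ and summing over $j$ produces the approximate energy balance, whose only delicate term is $\bigl(f(u_m),u_m'\bigr)$. I would control this term exactly as in the proof of Lemma \ref{c0}, using the elementary bound $\ln s\leq\frac{1}{e\sigma}s^{\sigma}$ for $s\geq1$ together with the compact embedding \eqref{embedding} of $H$ into $L^{p+\sigma}(\Omega)$; combined with Young's inequality this dominates the nonlinear contribution by $C\,\|u_m\|_H^{\theta}+\tfrac12\|u_m'\|_2^2$ for a suitable exponent $\theta$. A Gronwall argument then furnishes, for $T_0>0$ sufficiently small, bounds uniform in $m$ placing $u_m$ in $L^\infty(0,T_0;H)$, and $u_m'$ in $L^\infty(0,T_0;L^2(\Omega))\cap L^2(0,T_0;H_0^1(\Omega))$.

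The passage to the limit uses these bounds to extract weak-$*$ limits in the stated spaces, while the Aubin--Lions lemma supplies strong convergence of $u_m$ (and of $u_m$ a.e. in $\Omega\times(0,T_0)$) strong enough to identify the weak limit of $f(u_m)$ with $f(u)$ through continuity of the associated Nemytskii operator. I expect the handling of the logarithmic nonlinearity to be the main obstacle: because $\ln$ is not polynomially bounded and $f'(s)=|s|^{p-2}\bigl[(p-1)\ln|s|+1\bigr]$ has a logarithmic singularity at the origin, both the a priori estimate and the limit identification rely on trading the logarithm for an arbitrarily small power and then absorbing it via Sobolev embedding. The subcritical assumption (A) is precisely what makes the embedding into $L^{p+\sigma}(\Omega)$ strong enough to close these estimates.

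Finally, uniqueness follows by setting $w=u-v$ for two solutions, testing the difference equation with $w_t$, and bounding $\|f(u)-f(v)\|_2$ locally by $C\,\|w\|_H$ through the mean value theorem and (A); Gronwall's inequality then forces $w\equiv0$. The energy identity is obtained by multiplying \eqref{1.1} with $u_t$ and integrating over $\Omega$, using the boundary conditions to integrate by parts via $(\Delta^2u,u_t)=(\Delta u,\Delta u_t)$ and $(-\Delta u,u_t)=(\nabla u,\nabla u_t)$. This recasts the left-hand side as $\frac{d}{dt}E(t)+\omega\|\nabla u_t\|_2^2+\alpha(t)\|u_t\|_2^2$ and the right-hand side as $\frac{d}{dt}\bigl(\frac1p\int_\Omega|u|^p\ln|u|\,\mathrm{d}x-\frac{1}{p^2}\|u\|_p^p\bigr)$; specializing to $\omega=1$ and $\alpha(t)\equiv1$ yields \eqref{ei}. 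Rigorously, this identity is first established at the Galerkin level and then preserved in the limit.
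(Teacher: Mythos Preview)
The paper does not actually prove this theorem: immediately before the statement it says explicitly that the local existence result is ``stated without proof'' and can be obtained by slightly modifying the argument in \cite{Liu-Xu}, with further citations to \cite{Guo-Li,Wu}. Your Faedo--Galerkin outline is precisely the standard method those references employ (adapted here to the logarithmic nonlinearity via the $\ln s\le \tfrac{1}{e\sigma}s^\sigma$ trick and the embedding \eqref{embedding}), so your proposal is consistent with---and considerably more detailed than---what the paper itself supplies.

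One small technical remark: when you test the Galerkin system with $u_m'$, the nonlinear term $(f(u_m),u_m')$ is in fact an exact time derivative,
\[
(f(u_m),u_m')=\frac{d}{dt}\Big[\frac{1}{p}\int_\Omega |u_m|^p\ln|u_m|\,{\rm d}x-\frac{1}{p^2}\|u_m\|_p^p\Big],
\]
so the cleanest route is to write the exact approximate energy identity $E_m(t)+\int_0^t\|u_m'\|^2\,{\rm d}s=E_m(0)$ first, and then extract the a priori bound by controlling $\tfrac{1}{p}\int_\Omega|u_m|^p\ln|u_m|\,{\rm d}x\le C\|u_m\|_H^{p+\sigma}$ as in Lemma~\ref{c0}. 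This yields both the local bound (via a continuation argument for small $T_0$) and the identity \eqref{ei} in one pass, making the separate Young-inequality estimate you describe unnecessary.
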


At the end of this section, we present the well-known concavity lemma
which will play essential role in proving the blow-up result.

\begin{lemma}\label{concave}(See \cite{Han2018,Levine})
Suppose that a positive, twice-differentiable function $\psi(t)$ satisfies the inequality
$$\psi''(t)\psi(t)-(1+\theta)(\psi'(t))^2\geq0,$$
where $\theta>0$. If $\psi(0)>0$, $\psi'(0)>0$, then $\psi(t)\rightarrow\infty$ as $t\rightarrow t_*\leq t^*=\frac{\psi(0)}{\theta\psi'(0)}$.
\end{lemma}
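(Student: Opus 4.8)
The plan is to linearize the blow-up by passing to the auxiliary function $\phi(t)=\psi(t)^{-\theta}$, which converts the given second-order differential inequality into a simple concavity statement. First I would observe that $\phi$ is positive and twice differentiable on any interval where $\psi$ stays finite and positive, and compute
\begin{equation*}
\phi'(t)=-\theta\,\psi(t)^{-\theta-1}\psi'(t),
\end{equation*}
\begin{equation*}
\phi''(t)=-\theta\,\psi(t)^{-\theta-2}\big[\psi(t)\psi''(t)-(1+\theta)(\psi'(t))^2\big].
\end{equation*}
By hypothesis the bracketed quantity is nonnegative, and since $\theta>0$ and $\psi>0$ we have $\psi^{-\theta-2}>0$, so $\phi''(t)\le 0$; that is, $\phi$ is concave.

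Next I would exploit the initial conditions to fix the sign of the relevant data. From $\psi(0)>0$ and $\psi'(0)>0$ we get $\phi(0)=\psi(0)^{-\theta}>0$ and $\phi'(0)=-\theta\,\psi(0)^{-\theta-1}\psi'(0)<0$. Concavity forces the graph of $\phi$ to lie below its tangent line at $t=0$, so
\begin{equation*}
\phi(t)\le\phi(0)+\phi'(0)\,t
\end{equation*}
throughout the interval of existence. The right-hand side is affine with negative slope and vanishes exactly at
\begin{equation*}
t^*=-\frac{\phi(0)}{\phi'(0)}=\frac{\psi(0)}{\theta\,\psi'(0)},
\end{equation*}
which is precisely the claimed bound.

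Finally I would close the argument by contradiction with positivity. Since $\phi(t)>0$ wherever $\psi$ remains finite, while the linear upper bound forces $\phi$ to reach $0$ no later than $t^*$, the function $\phi$ cannot remain positive on all of $[0,t^*]$. Hence there is some $t_*\le t^*$ with $\phi(t)\to 0^+$ as $t\to t_*^-$, equivalently $\psi(t)^{-\theta}\to 0$, i.e. $\psi(t)\to+\infty$. The only point that needs genuine care — and the main, albeit minor, obstacle — is ensuring that $\phi$ actually attains the value $0$ in \emph{finite} time rather than merely decreasing toward it; this is secured by the fact that concavity yields a \emph{linear} (not merely monotone) upper bound, which pins the blow-up time to the explicit quantity $t^*$.
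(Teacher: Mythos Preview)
Your argument is correct and is precisely the classical Levine substitution $\phi=\psi^{-\theta}$ that the cited references \cite{Han2018,Levine} use; the paper itself does not supply a proof but simply quotes the lemma with these citations. There is nothing to add: your computation of $\phi''$, the tangent-line bound from concavity, and the identification of $t^*=\psi(0)/(\theta\psi'(0))$ are all standard and match the referenced proof.
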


\section{Blow-up for lower initial energy}
\setcounter{equation}{0}

In this section, we will investigate the blow-up phenomena of solutions to problem \eqref{1.1}
with lower initial energy. We first show that the unstable set $\mathcal{U}$ is invariant under the
flow of problem \eqref{1.1}, where
\begin{equation}\label{u}
\mathcal{U}=\{u\in H: I(u)<0,\ J(u)<d\},
\end{equation}
and $d$ is the depth of the potential well defined in \eqref{d}.

\begin{lemma}\label{inv}
  Let $u_0\in \mathcal{U}$ and $u_1\in L^2(\Omega)$ such that $E(0)<d$.
  Then $u(t)\in \mathcal{U}$ for all $t\in[0,T)$ and
  \begin{equation}\label{lower-bound-in-u}
   \dfrac{p-2}{2p}\|u(t)\|_H^2+\dfrac{1}{p^2}\|u(t)\|_p^p>d,\quad\forall\ t\in[0,T).
  \end{equation}
\end{lemma}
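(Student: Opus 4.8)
The plan is to combine the monotonicity of the energy from \eqref{ei} with the definition \eqref{d} of $d$ and a continuity-in-time argument. First I would note that \eqref{ei} gives $E'(t)\le0$, so $E(t)\le E(0)<d$ on $[0,T)$; since $\tfrac12\|u_t\|_2^2\ge0$, the expression \eqref{e} then yields
$$J(u(t))\le E(t)\le E(0)<d,\qquad\forall\,t\in[0,T),$$
which is already one of the two defining conditions of $\mathcal U$.

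Next I would prove $I(u(t))<0$ for all $t\in[0,T)$ by contradiction. Using the continuity of $t\mapsto I(u(t))$ (which follows from the regularity asserted in Theorem \ref{local-solution}) together with $I(u_0)<0$, if the conclusion failed there would be a first time $t_0\in(0,T)$ with $I(u(t_0))=0$ and $I(u(t))<0$ on $[0,t_0)$. The crucial point is to exclude $u(t_0)\equiv0$. On $[0,t_0)$ the inequality $I(u(t))<0$ reads $\|u(t)\|_H^2<\int_\Omega|u(t)|^p\ln|u(t)|\,\mathrm dx$, and repeating the chain of estimates in \eqref{2.6} — splitting $\Omega$ into $\{|u|<1\}$ and $\{|u|\ge1\}$, dropping the nonpositive contribution, and applying $\ln s\le\frac{1}{e\sigma}s^\sigma$ together with the embedding \eqref{embedding} — gives $\|u(t)\|_H>C_*$ for every $t\in[0,t_0)$. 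By continuity $\|u(t_0)\|_H\ge C_*>0$, so $u(t_0)\not\equiv0$; together with $I(u(t_0))=0$ this forces $u(t_0)\in\mathcal N$, and hence $J(u(t_0))\ge d$ by \eqref{d}. This contradicts $J(u(t_0))<d$ from the first step. Thus $I(u(t))<0$ on $[0,T)$ and $u(t)\in\mathcal U$ throughout.

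For the lower bound \eqref{lower-bound-in-u} I would project $u(t)$ onto the Nehari manifold. Set $K(u)=\frac{p-2}{2p}\|u\|_H^2+\frac{1}{p^2}\|u\|_p^p$, so that \eqref{i-j} reads $J(u)=K(u)+\frac1pI(u)$. Fix $t\in[0,T)$; since $I(u(t))<0$, Lemma \ref{fibering}(iii) provides a unique $\lambda^*=\lambda^*(u(t))\in(0,1)$ with $\lambda^*u(t)\in\mathcal N$. Because $I$ vanishes on $\mathcal N$, we have $K(\lambda^*u(t))=J(\lambda^*u(t))\ge d$ by \eqref{d}. On the other hand $\lambda\mapsto K(\lambda u(t))$ is strictly increasing on $(0,\infty)$, its derivative being $\frac{(p-2)\lambda}{p}\|u(t)\|_H^2+\frac{\lambda^{p-1}}{p}\|u(t)\|_p^p>0$; since $\lambda^*<1$ this gives $K(u(t))>K(\lambda^*u(t))\ge d$, which is precisely \eqref{lower-bound-in-u}.

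I expect the main obstacle to be the exclusion of $u(t_0)\equiv0$ at the first vanishing time of $I$: it is exactly here that the uniform lower bound $C_*$ from Lemma \ref{c0} is indispensable, for otherwise $u(t_0)$ could escape $\mathcal N$ and the comparison with the well depth $d$ (which is only attained on $\mathcal N$, and $0\notin\mathcal N$) would break down.
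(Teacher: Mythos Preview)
Your argument is correct and follows the same strategy as the paper: energy monotonicity for $J(u(t))<d$, a contradiction via the Nehari manifold for $I(u(t))<0$, and a Nehari projection for \eqref{lower-bound-in-u}. In fact you are more careful than the paper in two places. First, the paper simply asserts the existence of a time $t_1$ with $u(t_1)\in\mathcal N$, implicitly assuming $u(t_1)\not\equiv0$; your explicit exclusion of this via the uniform lower bound $\|u(t)\|_H\ge C_*$ on $[0,t_0)$ (inherited from the argument of Lemma~\ref{c0}) closes that gap. Second, the paper's displayed chain only yields $K(u(t))\ge d$ (and contains a typo, $J(\lambda^2(t)u(t))$ in place of $J(\lambda(t)u(t))$), whereas your observation that $\lambda\mapsto K(\lambda u(t))$ is strictly increasing together with $\lambda^*<1$ delivers the strict inequality stated in \eqref{lower-bound-in-u}.
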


\begin{proof}
First, it follows from \eqref{j}, \eqref{e} and \eqref{ei} that
$$J(u(t))\leq E(t)\leq E(0)<d,\quad\forall\  t\in[0,T).$$
Therefore, in order to prove $u(t)\in \mathcal{U}$ for all $t\in[0,T)$,
it suffices to show that $I(u(t))<0$ for all $t\in[0,T)$.
Assume by contradiction that there exists a $t_1\in(0,T)$ such that $u(t_1)\in \mathcal{N}$.
Then by the variational definition of $d$, we obtain
$$d\leq J(u(t_1))\leq E(t_1)\leq E(0)<d,$$
a contradiction.

For any $t\in[0,T)$, since $I(u(t))<0$, it follows from Lemma \ref{fibering} (iii) that there exists a  $\lambda(t)\in(0,1)$ such that $I(\lambda(t)u(t))=0$, i.e., $\lambda(t)u(t)\in \mathcal{N}$.
By the definition of $d$ and \eqref{i-j}, we have
\begin{equation*}
  \begin{split}
  \dfrac{p-2}{2p}\|u(t)\|_H^2+\dfrac{1}{p^2}\|u(t)\|_p^p\geq& \dfrac{(p-2)\lambda^2(t)}{2p}\|u(t)\|_H^2+\dfrac{\lambda^p(t)}{p^2}\|u(t)\|_p^p\\
  =&J(\lambda^2(t)u(t))\geq d.
  \end{split}
\end{equation*}
The proof is complete.
\end{proof}

With the preliminaries given above, we can show the first blow-up results for problem \eqref{1.1}
with lower initial energy.

\begin{theorem}\label{lower}
Let $p$ satisfy (A), $u_0\in \mathcal{U}$ and $u_1\in L^2(\Omega)$ such that $E(0)<d$.
Then the solution $u(x,t)$ to problem \eqref{1.1} blows up at a finite time $T$ in the sense that
\begin{equation}\label{blow-up}
\lim\limits_{t\rightarrow T^{-}}\Big(\|u(t)\|_2^2+\int_0^t \|u(s)\|^2{\rm d}s\Big)=\infty.
\end{equation}
Moreover, the blow-up time $T$ can be estimated from above as follows
\begin{equation}\label{upper bound}
 T\leq \dfrac{4\Big[\big(a^2+(p-2)^2b\|u_0\|^2_2\big)^{1/2}+a\Big]}{(p-2)^2b},
\end{equation}
where $a,b$ are constants that will be fixed in the proof.
\end{theorem}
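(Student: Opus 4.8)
The plan is to run a concavity (Levine-type) argument based on Lemma~\ref{concave}, building a positive functional whose logarithmic convexity is forced by the low-energy constraint. Throughout I would work on the maximal interval $[0,T)$ and use Lemma~\ref{inv}: the solution stays in $\mathcal{U}$, so $I(u(t))<0$ and, by \eqref{lower-bound-in-u}, $\frac{p-2}{2p}\|u(t)\|_H^2+\frac1{p^2}\|u(t)\|_p^p>d$ for every $t$. The auxiliary functional that matches the blow-up quantity in \eqref{blow-up} and is tailored to absorb both damping terms is
\[
\psi(t)=\|u(t)\|_2^2+\int_0^t\|u(s)\|^2\,{\rm d}s+(T_1-t)\|u_0\|^2+b(t+t_0)^2,
\]
with positive parameters $b,t_0,T_1$ fixed only at the end, the term $(T_1-t)\|u_0\|^2$ being present so that the time-integral can be differentiated cleanly.

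First I would differentiate. Testing the equation with $u$ and using the boundary conditions gives $(u,u_{tt})=-(\nabla u,\nabla u_t)-(u,u_t)-I(u)$, where the strong and weak damping contribute exactly the two cross terms that will cancel. Writing $\|u(t)\|^2-\|u_0\|^2=2\int_0^t[(u,u_t)+(\nabla u,\nabla u_t)]\,{\rm d}s$ yields $\psi'(t)=2(u,u_t)+2\int_0^t[(u,u_t)+(\nabla u,\nabla u_t)]\,{\rm d}s+2b(t+t_0)$, and a second differentiation collapses to the clean identity $\psi''(t)=2\|u_t\|_2^2-2I(u)+2b$. Substituting $-I(u)=-pE(t)+\frac p2\|u_t\|_2^2+\frac{p-2}{2}\|u\|_H^2+\frac1p\|u\|_p^p$, which follows from \eqref{i-j} and \eqref{e}, then using $E(t)\le E(0)$ together with \eqref{lower-bound-in-u} to discard the $\|u\|_H^2$ and $\|u\|_p^p$ contributions, I would reach $\psi''(t)\ge (p+2)\bigl(\|u_t\|_2^2+E(0)-E(t)+b\bigr)$ as soon as $0<b\le 2(d-E(0))$; here $d-E(0)>0$ is precisely the low-energy hypothesis.

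The second ingredient is a Cauchy--Schwarz estimate. Since $(u,u_t)+(\nabla u,\nabla u_t)\le\|u\|\,\|u_t\|$ and, from \eqref{ei}, $\int_0^t\|u_t\|^2\,{\rm d}s=E(0)-E(t)$, grouping the three pieces of $\psi'/2$ against the three masses $\|u\|_2^2$, $\int_0^t\|u\|^2\,{\rm d}s$, $b(t+t_0)^2$ gives $(\psi')^2\le 4\psi\,\Theta$ with $\Theta=\|u_t\|_2^2+E(0)-E(t)+b$; this is where $(T_1-t)\|u_0\|^2\ge0$ is used, to keep the mass factor below $\psi$ (valid for $t\le T_1$). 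Combining with the lower bound on $\psi''$ yields $\psi''\ge 4(1+\theta)\Theta$ and hence $\psi\psi''-(1+\theta)(\psi')^2\ge0$ with $\theta=\frac{p-2}{4}>0$. Choosing $t_0$ large enough that $\psi'(0)=2[(u_0,u_1)+bt_0]>0$ (while $\psi(0)>0$ is automatic), Lemma~\ref{concave} forces $\psi\to\infty$ at some finite $t_*\le \psi(0)/(\theta\psi'(0))$; as the added terms stay bounded on bounded intervals, this is exactly \eqref{blow-up} and gives $T\le t_*$.

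The remaining task, and the main obstacle, is the explicit bound \eqref{upper bound}. I would regard $t^*=\psi(0)/(\theta\psi'(0))=\frac{4}{p-2}\cdot\frac{\|u_0\|_2^2+T_1\|u_0\|^2+bt_0^2}{2[(u_0,u_1)+bt_0]}$ as a function of the shift $t_0$ and minimise it; the optimality condition is the quadratic $bt_0^2+2(u_0,u_1)t_0-C=0$, whose positive root is responsible for the square root in \eqref{upper bound}, with $a,b$ read off as the resulting constants. The delicate points will be (i) ensuring the concavity inequality is valid all the way up to $t_*$, which demands $T_1\ge t_*$ and hence a self-consistent choice of $T_1$ compatible with the optimisation, and (ii) tracking the passage from $\psi''$ to $4(1+\theta)\Theta$ exactly, since the entire argument rests on the coefficient $p+2$ matching $4(1+\theta)$ with $\theta=\frac{p-2}{4}$.
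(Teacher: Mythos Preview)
Your proposal is correct and follows essentially the same concavity argument as the paper: the same auxiliary functional $\psi$, the same computation $\psi''=2\|u_t\|_2^2-2I(u)+2b$, the same use of \eqref{lower-bound-in-u} and \eqref{ei} to reach $\psi''\ge(p+2)\Theta$, and the same Cauchy--Schwarz bound $(\psi')^2\le 4\psi\Theta$. The only points where the paper is more explicit are the two you flagged: it fixes $b=d-E(0)$ (which lies in your admissible range $0<b\le 2(d-E(0))$), and it resolves the self-consistency issue (i) by a two-step argument---first proving blow-up by contradiction on an arbitrary $[0,T^*]$, then re-running the estimate with $T_1$ replaced by the actual blow-up time $T$ and solving the resulting inequality $T\le \frac{2(\|u_0\|_2^2+T\|u_0\|^2+b\tau^2)}{(p-2)[(u_0,u_1)+b\tau]}$ for $T$ before optimising in $\tau$.
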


\begin{proof}
Assume by contradiction that the solution $u$ exists globally. As was done in \cite{Wu},
fix $T^*>0$ and define the functional
\begin{equation}\label{g}
G(t)=\|u(t)\|_2^2+\int_0^t \|u(s)\|^2{\rm d}s+(T^*-t)\|u_0\|^2+b(t+\tau)^2,\quad t\in[0,T^*],
\end{equation}
where $T^*$, $b$ and $\tau$ are positive constants to be fixed later.
Taking derivative we have
\begin{equation}\label{g1}
G'(t)=2(u,u_t)+2\int_0^t\int_\Omega (uu_s+\nabla u\cdot\nabla u_s){\rm d}x{\rm d}s+2b(t+\tau).
\end{equation}
Taking derivative again and using \eqref{ei}, \eqref{e} and Lemma \ref{inv},
we obtain
\begin{equation}\label{g2}
\begin{split}
G''(t)=&2\|u_t\|_2^2+2(u,u_{tt})+2\int_\Omega (uu_t+\nabla u\cdot\nabla u_t){\rm d}x+2b\\
      =&(p+2)\|u_t\|_2^2+(p-2)\|u\|_H^2-2pE(t)+2b+\frac{2}{p}\|u\|_p^p\\
   \geq&(p+2)\|u_t\|_2^2+2pd-2pE(t)+2b\\
      =&(p+2)\|u_t\|_2^2+2p(d-E(0))+2p\int_0^t\|u_s(s)\|^2{\rm d}s+2b.
\end{split}
\end{equation}
Choosing $b=d-E(0)>0$ and noticing $p>2$ we get
\begin{equation}\label{g3}
G''(t)\geq (p+2)[\|u_t\|_2^2+\int_0^t\|u_s(s)\|^2{\rm d}s+b]>0.
\end{equation}
Combining \eqref{g}, \eqref{g1} with \eqref{g3} we know, for any $t\in[0,T^*]$, that
\begin{equation*}\
\begin{split}
&G(t)G''(t)-\dfrac{p+2}{4}(G'(t))^2\\
\geq&(p+2)\Big[\|u(t)\|_2^2+\int_0^t \|u(s)\|^2{\rm d}s+b(t+\tau)^2\Big]\cdot\Big[\|u_t\|_2^2+\int_0^t\|u_s(s)\|^2{\rm d}s+b\Big]\\
&-(p+2)\Big[(u,u_t)+\int_0^t\int_\Omega (uu_s+\nabla u\cdot\nabla u_s){\rm d}x{\rm d}s+b(t+\tau)\Big]^2.
\end{split}
\end{equation*}
Since
\begin{equation}\label{cauchy-1}
(u,u_t)\leq \|u(t)\|_2\|u_t\|_2,
\end{equation}
\begin{equation}\label{cauchy-2}
\int_0^t\int_\Omega uu_s{\rm d}x{\rm d}s\leq \Big(\int_0^t\int_\Omega u^2{\rm d}x{\rm d}s\Big)^{1/2}\Big(\int_0^t\int_\Omega u^2_s{\rm d}x{\rm d}s\Big)^{1/2},
\end{equation}
\begin{equation}\label{cauchy-3}
\int_0^t\int_\Omega \nabla u\cdot\nabla u_s{\rm d}x{\rm d}s\leq \Big(\int_0^t\int_\Omega |\nabla u|^2{\rm d}x{\rm d}s\Big)^{1/2}\Big(\int_0^t\int_\Omega |\nabla u_s|^2{\rm d}x{\rm d}s\Big)^{1/2},
\end{equation}
it can be directly verified by using Cauchy-Schwarz inequality that
$$G(t)G''(t)-\dfrac{p+2}{4}(G'(t))^2\geq 0,\quad t\in[0,T^*].$$
Take
\begin{equation}\label{tau}
\tau=\max\Big\{0,\frac{2\|u_0\|^2-(p-2)(u_0,u_1)}{(p-2)b}\Big\},
\end{equation}
then
$$G(0)=\|u_0\|_2^2+T^*\|u_0\|^2+b\tau^2>0,$$
$$G'(0)=2(u_0,u_1)+2b\tau>0,$$
and
\begin{equation}\label{T*}
\frac{4G(0)}{(p-2)G'(0)}=\frac{2[\|u_0\|_2^2+T^*\|u_0\|^2+b\tau^2]}{(p-2)[(u_0,u_1)+b\tau]}\leq T^*,
\end{equation}
for suitably large $T^*$.
According to Lemma \ref{concave}, there exists a $T_*>0$ satisfying
\begin{equation}\label{T*}
T_*\leq \frac{4G(0)}{(p-2)G'(0)}
\end{equation}
such that
$$G(t)\rightarrow\infty\ \text{as}\ t\rightarrow T_*^-.$$
This contradicts with the assumption that $G(t)$ is well defined on the closed $[0,T^*]$ for any $T^*>0$.

To derive an upper bound for the blow-up time, we proceed as follows:
Let $T$ be the maximal existence time of $u(x,t)$ (which is finite by the above argument)
and let $G(t)$ be given in \eqref{g}, with the exception that $T^*$ is replaced by $T$ and $t\in[0,\overline{T}]$,
where $\overline{T}\in(0,T)$. Similarly to the foregoing arguments, one can show that
$$\overline{T}\leq \frac{2[\|u_0\|_2^2+T\|u_0\|^2+b\tau^2]}{(p-2)[(u_0,u_1)+b\tau]},$$
where we require that $\tau$, which is independent of $\overline{T}$, still satisfies \eqref{tau}.
It then follows from the arbitrariness of $\overline{T}<T$ that
\begin{equation}\label{t-1}
T\leq \frac{2[\|u_0\|_2^2+T\|u_0\|^2+b\tau^2]}{(p-2)[(u_0,u_1)+b\tau]},
\end{equation}
which guarantees
\begin{equation}\label{t-2}
 T\leq T(\tau)\triangleq\frac{2(\|u_0\|^2_2+b\tau^2)}{(p-2)[(u_0,u_1)+b\tau]-2\|u_0\|^2}.
\end{equation}
Set $a=2\|u_0\|^2-(p-2)(u_0,u_1)$ and $\tau_0=\dfrac{[a^2+(p-2)^2b\|u_0\|_2^2]^{1/2}+a}{(p-2)b}$.
Then it is an easy matter to verify that $\tau_0$ satisfies \eqref{tau}, $T(\tau)$ attains its minimum at $\tau_0$
and
$$T(\tau_0)=\dfrac{4\Big[\big(a^2+(p-2)^2b\|u_0\|^2_2\big)^{1/2}+a\Big]}{(p-2)^2b}.$$
Therefore,
$$T\leq \dfrac{4\Big[\big(a^2+(p-2)^2b\|u_0\|^2_2\big)^{1/2}+a\Big]}{(p-2)^2b}.$$
The proof is complete.
\end{proof}

\begin{remark}\label{rem3.1}
  By \eqref{e} and \eqref{i-j} and recalling that $p>2$ one sees $E(0)<0$ implies $I(u_0)<0$.
  Therefore, Theorem \ref{lower} implies that the solution $u(x,t)$ to problem \eqref{1.1} blows up in finite time
  for negative initial energy.
\end{remark}

\section{Blow-up for high initial energy}
\setcounter{equation}{0}

In this section we shall build a blow-up criterion for problem \eqref{1.1} at high initial energy level.
Some ideas used in this section are borrowed from \cite{Guo-Li} and \cite{Li-Liu}.
As a preliminary, we first establish a lemma that will play a fundamental role.

\begin{lemma}\label{lem1}
Let $p$ satisfy $(A)$. Assume that $u_0\in H$ and $u_1\in L^2(\Omega)$
such that
\begin{equation}\label{initial}
0<E(0)<\dfrac{C_0}{p}(u_0,u_1).
\end{equation}
Then the solution $u(x,t)$ to problem \eqref{1.1} satisfies
\begin{equation}\label{growth}
(u,u_t)-\frac{p}{C_0}E(t)\geq \Big[(u_0,u_1)-\dfrac{p}{C_0}E(0)\Big]e^{C_0t},\quad t\in[0,T).
\end{equation}
Here
\begin{equation}\label{c0}
C_0=\min\Big\{p+2,p(p-2)\lambda_1,\frac{(p-2)(\lambda_1+\lambda^2_1)}{2}\Big\}>0.
\end{equation}
\end{lemma}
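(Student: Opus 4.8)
The plan is to work with the auxiliary functional
\[
H(t):=(u,u_t)-\frac{p}{C_0}E(t),
\]
and to show that it obeys the linear differential inequality $H'(t)\ge C_0H(t)$ on $[0,T)$. Since \eqref{initial} says precisely that $H(0)=(u_0,u_1)-\frac{p}{C_0}E(0)>0$, multiplying this inequality by $e^{-C_0t}$ gives $(e^{-C_0t}H(t))'\ge0$, whence $H(t)\ge H(0)e^{C_0t}$; this is exactly \eqref{growth}. Thus everything reduces to establishing the differential inequality.

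To compute $H'(t)$ I would differentiate $(u,u_t)$ via $(u,u_t)'=\|u_t\|_2^2+(u,u_{tt})$, then multiply the equation in \eqref{1.1} (with $\omega=1$, $\alpha\equiv1$) by $u$ and integrate over $\Omega$. The boundary conditions $u=\Delta u=0$ turn the principal part into $-\|u\|_H^2-(\nabla u,\nabla u_t)-(u,u_t)$, while the right-hand side yields $\int_\Omega|u|^p\ln|u|\,{\rm d}x$. Rewriting this last integral through the definitions \eqref{j}, \eqref{e} of $J$ and $E$ as $\frac{p}{2}\|u_t\|_2^2+\frac{p}{2}\|u\|_H^2+\frac1p\|u\|_p^p-pE(t)$, and inserting $E'(t)=-\|u_t\|_2^2-\|\nabla u_t\|_2^2$ from \eqref{ei}, produces an explicit formula for $H'(t)$. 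The decisive point is that the $-pE(t)$ generated here cancels exactly against the $-pE(t)$ contained in $C_0H(t)$ (equivalently, the term $I(u)$ drops out), so that
\[
H'(t)-C_0H(t)=\Big(\tfrac{p+2}{2}+\tfrac{p}{C_0}\Big)\|u_t\|_2^2+\tfrac{p}{C_0}\|\nabla u_t\|_2^2+\tfrac{p-2}{2}\|u\|_H^2-(\nabla u,\nabla u_t)-(1+C_0)(u,u_t)+\tfrac1p\|u\|_p^p,
\]
a quadratic expression with no explicit energy term.

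It then remains to prove that the right-hand side is nonnegative. After discarding the harmless nonnegative term $\frac1p\|u\|_p^p$, the task is to dominate the two indefinite cross terms $-(\nabla u,\nabla u_t)$ and $-(1+C_0)(u,u_t)$ by the positive contributions. I would estimate them by Young's inequality, using the strong-damping term $\frac{p}{C_0}\|\nabla u_t\|_2^2$ to absorb $(\nabla u,\nabla u_t)$ and the term $\big(\frac{p+2}{2}+\frac{p}{C_0}\big)\|u_t\|_2^2$ to absorb the $\|u_t\|$-part of $(u,u_t)$, and then converting the residual spatial terms into multiples of $\|u\|_H^2$ via the eigenvalue inequalities $\|\nabla u\|_2^2\ge\lambda_1\|u\|_2^2$ and $\|\Delta u\|_2^2\ge\lambda_1^2\|u\|_2^2$ (so that $\|u\|_H^2\ge(\lambda_1+\lambda_1^2)\|u\|_2^2$; cf. \eqref{lam}). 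The three numbers $p+2$, $p(p-2)\lambda_1$ and $\frac{(p-2)(\lambda_1+\lambda_1^2)}{2}$ in the definition of $C_0$ are precisely the critical thresholds below which the coefficients of $\|u_t\|_2^2$, of the strong-damping/$\|\nabla u\|_2^2$ coupling, and of $\|u\|_2^2$ respectively remain nonnegative; choosing $C_0$ to be their minimum makes all of them nonnegative at once, which gives $H'(t)-C_0H(t)\ge0$.

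The main obstacle is exactly this last step: selecting the Young parameters and apportioning the fixed budgets $\frac{p}{C_0}\|\nabla u_t\|_2^2$, $\big(\frac{p+2}{2}+\frac{p}{C_0}\big)\|u_t\|_2^2$ and $\frac{p-2}{2}\|u\|_H^2$ between the two cross terms so that the three positivity conditions decouple into exactly the three bounds appearing in $C_0$. The differentiation, the integration by parts and the cancellation of the energy term are routine by comparison. One technical caveat is that $(u,u_t)'$ and the integration by parts are only formally valid at the level of weak solutions; this is justified, as in \cite{Wu,Guo-Li}, by the regularity $u_t\in L^2(0,T_0;H_0^1(\Omega))$ from Theorem \ref{local-solution} together with a standard density argument.
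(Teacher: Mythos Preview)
Your proposal is correct and follows essentially the same route as the paper: define $H(t)=(u,u_t)-\frac{p}{C_0}E(t)$, compute $H'(t)$ via the equation and the identity $E'(t)=-\|u_t\|^2$, eliminate the energy term, and then show $H'(t)\ge C_0H(t)$ by Young's inequality together with the spectral bounds $\|u\|_H^2\ge\lambda_1\|u\|^2$ and $\|u\|^2\ge(1+\lambda_1)\|u\|_2^2$. The only organisational difference is that the paper first introduces $F(t)=(u,u_t)$, estimates the two cross terms in $F'(t)$ with the explicit weights
\[
|(\nabla u,\nabla u_t)|\le\frac{C_0}{4p}\|\nabla u\|_2^2+\frac{p}{C_0}\|\nabla u_t\|_2^2,\qquad
|(u,u_t)|\le\frac{C_0}{4p}\|u\|_2^2+\frac{p}{C_0}\|u_t\|_2^2,
\]
so that the $\frac{p}{C_0}\|u_t\|^2$ produced here cancels exactly with $-\frac{p}{C_0}E'(t)$, and then bounds $C_0(u,u_t)\le\frac{C_0}{2}\|u_t\|_2^2+\frac{C_0}{2}\|u\|_2^2$ separately. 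These are precisely the Young parameters that make your ``three positivity conditions'' decouple into the three numbers defining $C_0$; you identified the mechanism correctly, and the paper simply supplies the explicit constants that carry out the step you flag as the main obstacle.
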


\begin{proof}
Set $F(t)=(u,u_t)$. By direct calculations and recalling \eqref{e} we have
\begin{equation}\label{4.3}
\begin{split}
   F'(t)=&\|u_t\|_2^2+(u,u_{tt}) \\
     =&\|u_t\|_2^2+(u,-\Delta^2u+\Delta u+\Delta u_t-u_t+|u|^{p-2}u\ln|u|)\\
     =&\|u_t\|_2^2-\|u\|_H^2-\int_\Omega \nabla u\cdot\nabla u_t{\rm d}x-(u,u_t)+\int_\Omega |u|^p\ln|u|{\rm d}x\\
     =&\dfrac{p+2}{2}\|u_t\|_2^2+\dfrac{p-2}{2}\|u\|_H^2-\int_\Omega \nabla u\cdot\nabla u_t{\rm d}x-(u,u_t)-pE(t)+\dfrac{1}{p}\|u\|_p^p\\
     \geq&\dfrac{p+2}{2}\|u_t\|_2^2+\dfrac{p-2}{2}\|u\|_H^2-\int_\Omega \nabla u\cdot\nabla u_t{\rm d}x-(u,u_t)-pE(t).
\end{split}
\end{equation}
By using Cauchy inequality, we can estimate the third and fourth terms in the last inequality as follows
\begin{equation}\label{4.4}
 |\int_\Omega \nabla u\cdot\nabla u_t{\rm d}x|\leq\dfrac{C_0}{4p}\|\nabla u\|_2^2+\dfrac{p}{C_0}\|\nabla u_t\|_2^2,
\end{equation}
\begin{equation}\label{4.5}
 |(u,u_t)|\leq\dfrac{C_0}{4p}\|u\|_2^2+\dfrac{p}{C_0}\|u_t\|_2^2.
\end{equation}
Substituting \eqref{4.4} and \eqref{4.5} into \eqref{4.3} we arrive at
\begin{equation}\label{4.6}
 F'(t)\geq \dfrac{p+2}{2}\|u_t\|_2^2+\dfrac{p-2}{2}\|u\|_H^2-\dfrac{C_0}{4p}\|u\|^2-\dfrac{p}{C_0}\| u_t\|^2-pE(t).
\end{equation}

Set $H(t)=F(t)-\dfrac{p}{C_0}E(t)$.
Then in view of \eqref{lam}, \eqref{ei}, \eqref{c0} and \eqref{4.6} we obtain
\begin{equation}\label{4.7}
\begin{split}
   H'(t)=&F'(t)-\dfrac{p}{C_0}E'(t)=F'(t)+\dfrac{p}{C_0}\|u_t\|^2\\
     \geq&\dfrac{p+2}{2}\|u_t\|_2^2+\dfrac{p-2}{2}\|u\|_H^2-\dfrac{C_0}{4p}\|u\|^2-pE(t)\\
     \geq&\dfrac{p+2}{2}\|u_t\|_2^2+\Big[\dfrac{(p-2)\lambda_1}{2}-\dfrac{p(p-2)\lambda_1}{4p}\Big]\|u\|^2-pE(t)\\
     =&\dfrac{p+2}{2}\|u_t\|_2^2+\dfrac{(p-2)\lambda_1}{4}\|u\|^2-pE(t)\\
     \geq&\dfrac{p+2}{2}\|u_t\|_2^2+\dfrac{(p-2)(\lambda_1+\lambda^2_1)}{4}\|u\|_2^2-pE(t)\\
     \geq&C_0\Big[\dfrac{1}{2}\|u_t\|_2^2+\dfrac{1}{2}\|u\|_2^2-\dfrac{p}{C_0}E(t)\Big]\geq C_0H(t).
\end{split}
\end{equation}
Since $H(0)>0$ by \eqref{initial}, \eqref{growth} follows after an application of Gronwall's inequality to $H(t)$.
The proof is complete.
\end{proof}

With Lemma \ref{lem1} at hand, we are now in the position to
prove high initial energy blow-up and estimate an upper bound
for the blow-up time for problem \eqref{1.1}.

\begin{theorem}\label{high}
Let all the assumptions in Lemma \ref{lem1} hold.
Then the solution $u(x,t)$ to problem \eqref{1.1} blows up at some finite time $T$ in the sense of \eqref{blow-up}.
Moreover, if
\begin{equation}\label{c2}
  E(0)<\frac{C_0}{p}\|u_0\|_2^2,
\end{equation}
then
\begin{equation}\label{t}
  T\leq \dfrac{2(\|u_0\|_2^2+\beta t_0^2)}{(p-2)[(u_0,u_1)+\beta t_0]-2\|u_0\|^2}.
\end{equation}
Here $C_0$ is the positive constant given in \eqref{c0},
\begin{equation}\label{beta}
\beta=2\Big[\frac{C_0}{p}\|u_0\|_2^2-E(0)\Big]>0,
\end{equation}
$t_0$ is suitably large such that
\begin{equation}\label{t0}
(p-2)\Big[(u_0,u_1)+\beta t_0\Big]>2\|u_0\|^2.
\end{equation}
\end{theorem}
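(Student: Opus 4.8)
The plan is to reproduce the concavity scheme of the proof of Theorem \ref{lower}, applied to the functional
\[
G(t)=\|u(t)\|_2^2+\int_0^t\|u(s)\|^2\,{\rm d}s+(T^*-t)\|u_0\|^2+\beta(t+t_0)^2,
\]
i.e. the functional \eqref{g} with $b$ replaced by the constant $\beta$ of \eqref{beta} and $\tau$ replaced by $t_0$. Arguing by contradiction, I would assume $u$ is global, so that $G$ is defined on every $[0,T^*]$, and differentiate twice using the equation exactly as in \eqref{g1}--\eqref{g2} to obtain the identical expression
\[
G''(t)=(p+2)\|u_t\|_2^2+(p-2)\|u\|_H^2+\frac{2}{p}\|u\|_p^p-2pE(t)+2\beta .
\]
The only genuine difference with Theorem \ref{lower} is how one makes $G''$ large: there one had the potential-well bound $(p-2)\|u\|_H^2+\frac2p\|u\|_p^p\ge 2pd$, whereas at high energy I must replace it by a bound built from Lemma \ref{lem1}.

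The heart of the matter is the inequality $G''(t)\ge(p+2)\big[\|u_t\|_2^2+\int_0^t\|u_s\|^2\,{\rm d}s+\beta\big]$. Writing $-2pE(t)=-2pE(0)+2p\int_0^t\|u_s\|^2\,{\rm d}s$ via \eqref{ei} and using $2p\ge p+2$, this collapses (after inserting $p\beta=2C_0\|u_0\|_2^2-2pE(0)$ from \eqref{beta}) to the single pointwise bound
\[
(p-2)\|u(t)\|_H^2+\frac{2}{p}\|u(t)\|_p^p\ge 2C_0\|u_0\|_2^2,\qquad t\in[0,T).
\]
To prove it I would first show $\|u(t)\|_2$ is nondecreasing. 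Lemma \ref{lem1} gives $(u,u_t)-\frac{p}{C_0}E(t)\ge\big[(u_0,u_1)-\frac{p}{C_0}E(0)\big]e^{C_0t}>0$, hence $(u,u_t)>0$ whenever $E(t)\ge0$, so $\frac{d}{dt}\|u\|_2^2=2(u,u_t)\ge0$ and $\|u(t)\|_2^2\ge\|u_0\|_2^2$. (If instead $E(t_1)<0$ at some $t_1$, then $J(u(t_1))\le E(t_1)<0<d$ forces, through \eqref{i-j}, $I(u(t_1))<0$, so $u(t_1)\in\mathcal U$ with energy below $d$ and Theorem \ref{lower} already yields finite-time blow-up in the sense of \eqref{blow-up}; thus one may assume $E\ge0$ on $[0,T)$.) Then combining \eqref{lam} with the Poincar\'e inequality $\|\nabla u\|_2^2\ge\lambda_1\|u\|_2^2$ gives $\|u\|_H^2\ge(\lambda_1+\lambda_1^2)\|u\|_2^2$, and since $C_0\le\frac{(p-2)(\lambda_1+\lambda_1^2)}{2}$ by \eqref{c0}, I obtain $(p-2)\|u\|_H^2\ge 2C_0\|u\|_2^2\ge 2C_0\|u_0\|_2^2$, which is exactly the required bound.

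With the bound on $G''$ secured, the Cauchy--Schwarz inequalities \eqref{cauchy-1}--\eqref{cauchy-3} give, precisely as in Theorem \ref{lower}, $G(t)G''(t)-\frac{p+2}{4}(G'(t))^2\ge0$. Since $(u_0,u_1)>\frac{p}{C_0}E(0)>0$ by \eqref{initial}, one checks $G(0)>0$ and $G'(0)=2[(u_0,u_1)+\beta t_0]>0$, so Lemma \ref{concave} with $\theta=\frac{p-2}{4}$ drives $G$, and hence $\|u\|_2^2+\int_0^t\|u\|^2$, to infinity in finite time, contradicting global existence. For the upper bound I would rerun the computation with $T^*$ replaced by the maximal time $T$ on $[0,\overline T]$, $\overline T<T$; the concavity estimate yields $\overline T\le\frac{2[\|u_0\|_2^2+T\|u_0\|^2+\beta t_0^2]}{(p-2)[(u_0,u_1)+\beta t_0]}$, and letting $\overline T\uparrow T$ gives the self-referential inequality $T\le\frac{2[\|u_0\|_2^2+T\|u_0\|^2+\beta t_0^2]}{(p-2)[(u_0,u_1)+\beta t_0]}$, which solves to \eqref{t} exactly when the denominator $(p-2)[(u_0,u_1)+\beta t_0]-2\|u_0\|^2$ is positive, i.e. under condition \eqref{t0}.

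The main obstacle is the monotonicity of $\|u(t)\|_2$: unlike the low-energy setting there is no a priori sign control on $(u,u_t)$, and it is precisely Lemma \ref{lem1} together with the dichotomy on the sign of $E(t)$ (reducing to Theorem \ref{lower} when $E<0$) that supplies it. A secondary but essential point is the constant bookkeeping---verifying $\|u\|_H^2\ge(\lambda_1+\lambda_1^2)\|u\|_2^2$ and $C_0\le\frac{(p-2)(\lambda_1+\lambda_1^2)}{2}$---which is what lets the single inequality close up with the specific choice of $\beta$ in \eqref{beta}.
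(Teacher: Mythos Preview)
Your argument for the upper bound (Step II in the paper) is essentially the paper's own argument: the same functional $K(t)=G(t)$ with parameter $\beta$, the same concavity computation, the same use of $\|u\|_H^2\ge(\lambda_1+\lambda_1^2)\|u\|_2^2$ together with $C_0\le\frac{(p-2)(\lambda_1+\lambda_1^2)}{2}$, and the same monotonicity $\|u(t)\|_2^2\ge\|u_0\|_2^2$ coming from Lemma~\ref{lem1} under $E(t)\ge0$. That part is fine.

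The gap is in the \emph{blow-up} assertion itself. The theorem claims finite-time blow-up under the hypotheses of Lemma~\ref{lem1} alone, namely $0<E(0)<\frac{C_0}{p}(u_0,u_1)$; the extra condition \eqref{c2}, $E(0)<\frac{C_0}{p}\|u_0\|_2^2$, is imposed \emph{only} for the upper bound \eqref{t}. Your concavity scheme, however, is built on the functional $G$ with the specific parameter $\beta=2\big[\frac{C_0}{p}\|u_0\|_2^2-E(0)\big]$, and every step---positivity of $\beta$, the Cauchy--Schwarz reduction, and the closing inequality $(p-2)\|u\|_H^2\ge 2C_0\|u_0\|_2^2=p\beta+2pE(0)$---requires $\beta>0$, i.e.\ precisely \eqref{c2}. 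So your argument establishes blow-up only under Lemma~\ref{lem1} \emph{plus} \eqref{c2}, leaving uncovered the case $\frac{C_0}{p}\|u_0\|_2^2\le E(0)<\frac{C_0}{p}(u_0,u_1)$.

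The paper handles Step I by a completely different, concavity-free mechanism: under the contradiction assumption of global existence and $E(t)\ge0$, Lemma~\ref{lem1} gives the exponential lower bound $\|u(t)\|_2^2\ge\|u_0\|_2^2+\frac{2H(0)}{C_0}(e^{C_0t}-1)$, while Minkowski, H\"older and the energy identity \eqref{ei} give the polynomial upper bound $\|u(t)\|_2\le\|u_0\|_2+\sqrt{E(0)/(1+\lambda_1)}\,t^{1/2}$; the two are incompatible for large $t$. This growth-rate comparison needs only \eqref{initial}, not \eqref{c2}. To complete your proof you must either supply an argument of this type for the general case, or show how to run a concavity argument without assuming $\beta>0$ at $t=0$ (e.g.\ by first using the exponential growth of $\|u\|_2^2$ to shift the initial time until the analogue of \eqref{c2} holds---but that again invokes the paper's Step~I idea).
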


\begin{proof}
We divide the proof into two steps.

{\bf Step I: Finite time blow-up.}
Suppose by contradiction that \eqref{blow-up} will not happen for any finite $T$.
Then $\|u(\cdot,t)\|_2$ is well-defined for all $t\geq0$.
Without loss of generality, we may assume that $E(t)\geq0$ for all $t\geq0$.
Otherwise by Remark \ref{rem3.1} we know that $u(x,t)$ blows up in finite time.

On one hand, it follows from Lemma \ref{lem1} that
\begin{equation}\label{4.13}
\dfrac{d}{dt}\|u(t)\|_2^2=2(u,u_t)\geq 2H(0)e^{C_0t}+\dfrac{2p}{C_0}E(t)\geq 2H(0)e^{C_0t}.
\end{equation}
Integration of \eqref{4.13} over $[0,t]$ yields
\begin{equation}\label{4.14}
\begin{split}
\|u(t)\|_2^2=&\|u_0\|_2^2+2\int_0^t\int_\Omega uu_\tau {\rm d}x{\rm d}\tau\geq\|u_0\|_2^2+2\int_0^t H(0)e^{C_0\tau}{\rm d}\tau\\
=&\|u_0\|_2^2+\dfrac{2H(0)}{C_0}(e^{C_0t}-1).
\end{split}
\end{equation}

On the other hand, by virtue of Minkowski inequality,
H\"{o}lder inequality, \eqref{ei}, the definition of $\lambda_1$ and the fact $E(t)\geq0$ one gets
\begin{equation*}
\begin{split}
\|u(t)\|_2\leq&\|u_0\|_2+\|u(t)-u_0\|_2=\|u_0\|_2+\|\int_0^t u_\tau{\rm d}\tau\|_2\\
\leq&\|u_0\|_2+\int_0^t \|u_\tau\|_2{\rm d}\tau\leq \|u_0\|_2
+\dfrac{1}{\sqrt{1+\lambda_1}}\int_0^t \|u_\tau\|{\rm d}\tau\\
\leq&\|u_0\|_2
+\dfrac{\sqrt{t}}{\sqrt{1+\lambda_1}}\Big(\int_0^t \|u_\tau\|^2{\rm d}\tau\Big)^{1/2}=\|u_0\|_2+\dfrac{\sqrt{t}}{\sqrt{1+\lambda_1}}(E(0)-E(t))^{1/2}\\
\leq&\|u_0\|_2+\sqrt{\dfrac{E(0)}{1+\lambda_1}}t^{1/2},
\end{split}
\end{equation*}
which contradicts \eqref{4.14} when $t$ is sufficiently large.
Therefore, $u(x,t)$ blows up in finite time.

{\bf Step II: Upper bound for the blow-up time.} From now on, we assume that $T>0$ is the blow-up time of $u(x,t)$,
which is finite by Step I.
According to Lemma \ref{lem1} and the assumption that $E(t)\geq 0$, we see that
\begin{equation*}
\dfrac{d}{dt}\|u(t)\|_2^2=2(u,u_t)\geq 2H(0)e^{C_0t}+\dfrac{2p}{C_0}E(t)> 0,\quad t\in[0,T),
\end{equation*}
which implies $\|u(t)\|_2^2$ is increasing with respect to $t$.
To estimate $T$ from above, as was done in the proof of Theorem \ref{lower}, we define
\begin{equation}\label{g}
K(t)=\|u(t)\|_2^2+\int_0^t \|u(s)\|^2{\rm d}s+(T-t)\|u_0\|^2+\beta(t+t_0)^2,\quad \quad t\in[0,T),
\end{equation}
where $\beta$ and $t_0$ are given in \eqref{beta} and \eqref{t0}, respectively.
By applying similar argument to that in the proof of the first part of Theorem \ref{lower}, we obtain
\begin{equation}\label{concave-inequality}
\begin{split}
&K(t)K''(t)-\dfrac{p+2}{4}(K'(t))^2\\
=&2K(t)\Big[\|u_t\|_2^2-\|u\|_H^2+\int_\Omega |u|^p\ln|u|{\rm d}x+\beta\Big]\\
&-(p+2)\Big[(u,u_t)+\int_0^t\int_\Omega (uu_s+\nabla u\cdot\nabla u_s){\rm d}x{\rm d}s+\beta(t+t_0)\Big]^2\\
=&2K(t)\Big[\|u_t\|_2^2-\|u\|_H^2+\int_\Omega |u|^p\ln|u|{\rm d}x+\beta\Big]\\
&+(p-2)\Big[\eta(t)-\big(K(t)-(T-t)\|u_0\|^2\big)\big(\|u_t\|_2^2+\int_0^t \|u_s\|^2{\rm d}s+\beta\big)\Big],
\end{split}
\end{equation}
where
\begin{equation}\label{concave-inequality-1}
\begin{split}
\eta(t)=&\Big[\|u(t)\|_2^2+\int_0^t \|u(s)\|^2{\rm d}s+\beta(t+t_0)^2\Big]\Big[\|u_t\|_2^2+\int_0^t \|u_s\|^2{\rm d}s+\beta\Big]\\
&-\Big[(u,u_t)+\int_0^t\int_\Omega (uu_s+\nabla u\cdot\nabla u_s){\rm d}x{\rm d}s+\beta(t+t_0)\Big]^2.
\end{split}
\end{equation}
Using \eqref{cauchy-1}-\eqref{cauchy-3} and Cauchy-Schwarz inequality we can show that $\eta(t)\geq 0$ on $[0,T)$.
Therefore, recalling \eqref{ei}, \eqref{e}, \eqref{beta} and the monotonicity of $\|u(t)\|_2^2$, we have
\begin{equation}\label{concave-inequality-2}
\begin{split}
&K(t)K''(t)-\dfrac{p+2}{4}(K'(t))^2\\
\geq &2K(t)\Big[\|u_t\|_2^2-\|u\|_H^2+\int_\Omega |u|^p\ln|u|{\rm d}x+\beta\Big]-(p+2)K(t)\Big[\|u_t\|_2^2+\int_0^t \|u_s\|^2{\rm d}s+\beta\Big]\\
=&K(t)\Big[(p-2)\|u\|_H^2-2pE(0)+(p-2)\int_0^t \|u_s\|^2{\rm d}s+\frac{2}{p}\|u\|_p^p-p\beta\Big]\\
\geq&K(t)\Big[(p-2)(\lambda_1+\lambda^2_1)\|u\|_2^2-2pE(0)-p\beta\Big]\\
\geq&K(t)\Big[(p-2)(\lambda_1+\lambda^2_1)\|u_0\|_2^2-2pE(0)-p\beta\Big]\\
=&2pK(t)\Big[\frac{C_0}{p}\|u_0\|_2^2-E(0)-\beta/2\Big]\geq0,\quad t\in[0,T).
\end{split}
\end{equation}
Besides, $K(0)=\|u_0\|_2^2+T\|u_0\|^2+\beta t_0^2>0$ and $K'(0)=2(u_0,u_1)+2\beta t_0>0$ by \eqref{t0}.
Applying Lemma \ref{concave} to $K(t)$ yields
\begin{equation*}
T\leq\dfrac{4K(0)}{(p-2)K'(0)}=\dfrac{2(\|u_0\|_2^2+T\|u_0\|^2+\beta t_0^2)}{(p-2)[(u_0,u_1)+\beta t_0]}.
\end{equation*}
Since $\dfrac{2\|u_0\|^2}{(p-2)[(u_0,u_1)+\beta t_0]}<1$ by \eqref{t0}, we further obtain
\begin{equation*}
T\leq\dfrac{2(\|u_0\|_2^2+\beta t_0^2)}{(p-2)[(u_0,u_1)+\beta t_0]-2\|u_0\|^2}.
\end{equation*}
The proof of Theorem \ref{high} is complete.
\end{proof}

\begin{remark}
As was done in deriving \eqref{upper bound}, one can also minimize the right-hand side term of \eqref{t} for $t_0$ satisfying \eqref{t0}
to obtain a more accurate upper bound for $T$. Interested reader may check it.
\end{remark}

\section{Lower bound for the blow-up time}
\setcounter{equation}{0}

Since the lower bound for the blow-up time provides a safe time interval for the system under consideration,
it is more important in practice to estimate $T$ from below. In this section,
our aim is to determine a lower bound for the blow-up time of problem \eqref{1.1} by constructing a new auxiliary functional.
Throughout this section we shall use $C,C_1,C_2,\cdots,$ to denote generic positive constants
which may depend on $\Omega,p,n$, but are independent of the solution $u(x,t)$.

\begin{theorem}\label{lower-bound}
Assume that $p$ satisfies $\dfrac{2n}{n+2}<\dfrac{2n(p-1)}{n+2}<2_*$, i.e.,
\begin{equation}\label{p}
 p\in(2,\infty)\quad if\quad  n\leq 4;\quad p\in\big(2,\frac{2n-2}{n-4}\big)\quad  if \quad n\geq 5.
\end{equation}
Let $u(x,t)$ be a weak solution to problem \eqref{1.1} that blows up at $T$ in the sense of \eqref{blow-up}.
Then $$T\geq \int_{N(0)}^\infty\dfrac{{\rm d}s}{C_4+C_5s^{p-1+\mu}},$$
where $N(0)=\|u_1\|_2^2+\|u_0\|_H^2$.
\end{theorem}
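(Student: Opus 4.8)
The plan is to establish a differential inequality of the form
$N'(t) \le C_4 + C_5\, N(t)^{p-1+\mu}$
for a suitably chosen auxiliary functional $N(t)$, and then integrate it to produce the stated lower bound on $T$. The natural choice suggested by the statement is
$N(t)=\|u_t(t)\|_2^2+\|u(t)\|_H^2$,
so that $N(0)=\|u_1\|_2^2+\|u_0\|_H^2$ matches the quantity appearing in the integral. The first step is to differentiate $N(t)$ in $t$, using the equation $u_{tt}=-\Delta^2u+\Delta u+\Delta u_t-u_t+|u|^{p-2}u\ln|u|$ and integration by parts. Many terms will be handled by recognizing them as pieces of the energy dissipation \eqref{ei}; in particular the strong damping term $-\Delta u_t$ produces a favorable $-\|\nabla u_t\|_2^2$ contribution, which is exactly the point the abstract emphasizes (``making full use of the strong damping term''). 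After cancellation, the only genuinely dangerous term is the one coming from the nonlinearity, namely an integral of the type $\int_\Omega |u|^{p-2}u\ln|u|\,u_t\,{\rm d}x$ or, after integrating the definition of $N$, a term controlled by $\int_\Omega|u|^p\ln|u|\,{\rm d}x$.

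The second step is to bound this logarithmic nonlinear term by a power of $N(t)$. Here I would again invoke the elementary inequality $\ln s\le \frac{1}{e\mu}s^\mu$ for $s\ge 1$ (used already in the proof of Lemma \ref{c0}), splitting $\Omega$ into the region where $|u|<1$, on which $|u|^p\ln|u|$ is bounded and contributes to the constant $C_4$, and the region where $|u|\ge 1$, on which $|u|^p\ln|u|\le \frac{1}{e\mu}|u|^{p+\mu}$. This converts the logarithmic growth into pure power growth $\|u\|_{p+\mu}^{p+\mu}$ at the cost of an arbitrarily small parameter $\mu>0$. The crux is then to absorb $\|u\|_{p+\mu}^{p+\mu}$ (or a product of $u_t$ with such a power) into $N(t)^{p-1+\mu}$ via the Sobolev embedding \eqref{embedding}. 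This is precisely where the sharpened exponent restriction \eqref{p}, $p<\frac{2n-2}{n-4}$ for $n\ge 5$, enters: it guarantees that the relevant exponent $\frac{2n(p-1)}{n+2}$ (which arises from combining a Gagliardo--Nirenberg or H\"older interpolation with the $H\hookrightarrow L^{p+\mu}$ embedding) stays below the critical value $2_*$, so the embedding constant is finite. I would use H\"older's inequality to split the nonlinear product, estimate the $u_t$ factor by $\|u_t\|_2$ and the $u$ factor by a high Lebesgue norm, and then interpolate that norm between $\|u\|_2$ and $\|u\|_H$ so that the total power matches $N^{p-1+\mu}$.

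The third step is purely mechanical: once
$N'(t)\le C_4+C_5\,N(t)^{p-1+\mu}$
is in hand, separate variables and integrate from $0$ to $T$. Since $N(t)\to\infty$ as $t\to T^-$ (which follows because blow-up in the sense of \eqref{blow-up} forces $\|u(t)\|_H^2$, hence $N(t)$, to become unbounded), we obtain
$T \ge \int_{N(0)}^{\infty}\frac{{\rm d}s}{C_4+C_5 s^{p-1+\mu}},$
and the exponent $p-1+\mu>1$ ensures this integral converges, yielding a genuine positive lower bound.

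\textbf{The main obstacle} I anticipate is the second step: correctly choosing the interpolation exponents so that the logarithmic nonlinear term is dominated by $C_4+C_5N^{p-1+\mu}$ with the \emph{same} power $p-1+\mu$ that appears in the final integrand, while simultaneously respecting the admissibility condition \eqref{p}. One must verify that the intermediate Lebesgue exponent produced by H\"older's inequality together with the factor $\|u_t\|_2$ indeed lands inside the range where $H\hookrightarrow L^{p+\mu}$ holds, and that after interpolating $\|u\|$ between $L^2$ and $H$ the exponents on $\|u\|_2^2$ and $\|u\|_H^2$ combine to exactly $p-1+\mu$ rather than something larger. Getting this bookkeeping exactly right—rather than merely up to a larger power that would still blow up the integral—is the delicate part, and it is what makes the strong damping term $\|\nabla u_t\|_2^2$ (available thanks to \eqref{ei}) essential: without it one could not control the gradient contributions that arise when differentiating $\|u\|_H^2$.
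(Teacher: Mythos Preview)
Your overall architecture matches the paper's: define $N(t)=\|u_t\|_2^2+\|u\|_H^2$, differentiate using the equation to obtain $N'(t)=-2\|u_t\|^2+2\int_\Omega u_t|u|^{p-2}u\ln|u|\,{\rm d}x$, split $\Omega$ according to $|u|\lessgtr 1$, invoke $\ln s\le (e\mu)^{-1}s^\mu$ on $\{|u|\ge 1\}$ and the bound $|s^{p-2}\ln s|\le (e(p-1))^{-1}$ on $\{|u|<1\}$, and then integrate the resulting differential inequality. The gap is in your second step, where you write that you will ``estimate the $u_t$ factor by $\|u_t\|_2$.'' That choice forces the $u$ factor into $L^{2(p-1+\mu)}$, and the embedding $H\hookrightarrow L^{2(p-1+\mu)}$ holds only when $2(p-1)<2_*$, i.e.\ $p<\tfrac{2n-4}{n-4}$ for $n\ge 5$. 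That is merely the subcritical range; no amount of interpolation between $\|u\|_2$ and $\|u\|_H$ will push you past it, so as written you would not recover the full interval $p<\tfrac{2n-2}{n-4}$ in \eqref{p}.

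The missing idea is the one the abstract advertises. The term $-2\|u_t\|^2=-2\|u_t\|_2^2-2\|\nabla u_t\|_2^2$ in $N'(t)$ (produced by the two damping mechanisms after integration by parts, not by ``the gradient contributions that arise when differentiating $\|u\|_H^2$'' as you suggest) lets you absorb the full $H^1$ norm of $u_t$, not just its $L^2$ norm. Accordingly, in the H\"older splitting you should place $u_t$ in $L^{2n/(n-2)}$, controlled by $C\|u_t\|$ via $H_0^1\hookrightarrow L^{2n/(n-2)}$ for $n\ge 3$, and the remaining factor $|u|^{p-1+\mu}$ in the dual space $L^{2n/(n+2)}$, i.e.\ $u\in L^{2n(p-1+\mu)/(n+2)}$. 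The hypothesis $\tfrac{2n(p-1)}{n+2}<2_*$ is precisely what makes this last embedding valid for small $\mu>0$. Young's inequality then yields $\varepsilon\|u_t\|^2+C(\varepsilon)\big(C_1+C_2\|u\|_H^{2(p-1+\mu)}\big)$; choosing $\varepsilon\le 1$ absorbs the first piece into $-2\|u_t\|^2$, and the second is bounded by $C_4+C_5 N(t)^{p-1+\mu}$ directly, with no interpolation needed. Your third step is then correct as stated.
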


\begin{proof}
For simplicity, we only prove this theorem for $n\geq 3$. The case for $n=1,2$ is similar (and simpler).
We aim to determine a time interval $(0,T_0)$ on which the quantity $\|u(t)\|_H^2$ is bounded.
Clearly $T_0$ is a lower bound for $T$ since both $\|u(t)\|_2^2$ and $\|u(t)\|^2$ can be bounded by $\|u(t)\|_H^2$.

Define
\begin{equation}\label{n1}
N(t)=\|u_t(t)\|_2^2+\|u(t)\|_H^2,\quad t\in[0,T_0).
\end{equation}
Then
\begin{equation}\label{n2}
\lim_{t\rightarrow T_0^{-}}N(t)=+\infty.
\end{equation}
Differentiating \eqref{n1} and making use of Green's second identity,
we obtain
\begin{align}\label{n3}
N'(t)&=2[(u_t,u_{tt})+(\Delta u,\Delta u_t)+(\nabla u,\nabla u_t)]\nonumber\\
&=2(u_t,u_{tt}+\Delta^2 u-\Delta u)\nonumber\\
&=2(u_t,\Delta u_t-u_t+|u|^{p-2}\ln|u|)\nonumber\\
&=-2\|u_t\|^2+2\int_\Omega u_t|u|^{p-2}u\ln|u|{\rm d}x.
\end{align}

Set
$\Omega_1=\Omega_1(t)=\{x\in\Omega: |u(x,t)|<1\}$ and $\Omega_2=\Omega_2(t)=\{x\in\Omega: |u(x,t)|\geq1\}$.
Since $p$ satisfies \eqref{p}, we can choose $\mu>0$ suitably small such that $\frac{2n(p-1+\mu)}{n+2}<2_*$,
which implies that $H$ can be embedded into $L^{\frac{2n(p-1+\mu)}{n+2}}(\Omega)$ continuously.
We use $B_\mu$ to denote the embedding constant from $H$ to $L^{\frac{2n(p-1+\mu)}{n+2}}(\Omega)$,
i.e.,
\begin{equation}\label{n4}
\|v\|_{\frac{2n(p-1+\mu)}{n+2}}\leq B_\mu\|v\|_H,\qquad\forall\ v\in H.
\end{equation}
Using H\"{o}lder's inequality, Cauchy inequality, \eqref{n4} and the basic inequalities $|s^{p-2}\ln s|\leq (e(p-1))^{-1}$ for $0<s<1$ and $\ln s\leq\dfrac{1}{e\mu}s^\mu$
for $s\geq 1$, we can estimate the second term on the right-hand side of \eqref{n3} as follows
\begin{align}\label{n5}
&\int_\Omega u_t|u|^{p-2}u\ln|u|{\rm d}x\nonumber=\int_{\Omega_1} u_t|u|^{p-2}u\ln|u|{\rm d}x+\int_{\Omega_2} u_t|u|^{p-2}u\ln|u|{\rm d}x\nonumber\\
\leq&\Big(\int_{\Omega_1} |u_t|^{\frac{2n}{n-2}}{\rm d}x\Big)^{\frac{n-2}{2n}}\Big(\int_{\Omega_1} \big||u|^{p-2}u\ln|u|\big|^{\frac{2n}{n+2}}{\rm d}x\Big)^{\frac{n+2}{2n}}\nonumber\\
&+\Big(\int_{\Omega_2} |u_t|^{\frac{2n}{n-2}}{\rm d}x\Big)^{\frac{n-2}{2n}}\Big(\int_{\Omega_2} \big||u|^{p-2}u\ln|u|\big|^{\frac{2n}{n+2}}{\rm d}x\Big)^{\frac{n+2}{2n}}\nonumber\\
\leq&\|u_t\|_{\frac{2n}{n-2}}\Big[(e(p-1))^{-1}|\Omega_1|^{\frac{n+2}{2n}}+(e\mu)^{-1}\Big(\int_{\Omega} |u|^{\frac{2n(p-1+\mu)}{n+2}}{\rm d}x\Big)^{\frac{n+2}{2n}}\Big]\nonumber\\
\leq&C\|u_t\|\Big[(e(p-1))^{-1}|\Omega_1|^{\frac{n+2}{2n}}+(e\mu)^{-1}B^{p-1+\mu}_\mu\|u\|^{p-1+\mu}_H\Big]\nonumber\\
\leq&\varepsilon \|u_t\|^2+C(\varepsilon)\Big[C_1+C_2\|u\|^{2(p-1+\mu)}_H\Big]\nonumber\\
\leq&\varepsilon \|u_t\|^2+C(\varepsilon)\Big[C_1+C_3N^{p-1+\mu}(t)\Big].
\end{align}
Therefore, it follows by taking $\varepsilon\leq 1$ and substituting \eqref{n5} into \eqref{n3} that
\begin{equation}\label{n6}
N'(t)\leq C_4+C_5N^{p-1+\mu}(t).
\end{equation}
Integrating \eqref{n6} over $[0,t]$, we have
\begin{equation}\label{n7}
\int_0^t\dfrac{N'(\tau)}{C_4+C_5N^{p-1+\mu}(\tau)}{\rm d}\tau\leq t.
\end{equation}
Letting $t\rightarrow T_0^{-}$ and recalling \eqref{n2}, we obtain
\begin{equation}\label{n8}
\int_{N(0)}^\infty\dfrac{{\rm d}s}{C_4+C_5s^{p-1+\mu}}\leq T_0\leq T.
\end{equation}
Recalling that $p-1+\mu>1$, the left-hand side term in \eqref{n8} is finite.
The proof is complete.
\end{proof}

\begin{remark}
  By making full use of the damping term, we obtain the lower bound for the blow-up time not only for subcritical exponent $p$, but also for some supercritical ones. We point out that this observation can also be applied to
  problem \eqref{sec-2} considered in \cite{Di}.
\end{remark}

{\bf Acknowledgement}\\
The authors would like to express their sincere gratitude to Professor Wenjie Gao for his enthusiastic
guidance and constant encouragement.

\end{document}